\documentclass[11pt]{amsart}
\usepackage{array,epsfig}

\usepackage[style=alphabetic,sorting=none, maxcitenames=10, maxbibnames=10]{biblatex}
\usepackage{IEEEtrantools}

\usepackage{caption}
\usepackage{amsmath}
\usepackage{amsfonts}
\usepackage{amssymb}
\usepackage{amsxtra}
\usepackage{amsthm}
\usepackage{mathrsfs}
\usepackage{color}
\usepackage[dvipsnames]{xcolor}
\usepackage{tikz}
\usepackage{tikz-cd}
\usepackage{mathabx,epsfig}
\usepackage[matrix,arrow]{xy}
\usepackage{comment}
\usepackage{enumerate}
\usepackage{mathtools}
\usepackage{ytableau}
\usepackage{hyperref}
\usepackage{stmaryrd}
\usepackage{extarrows}

\hypersetup{
    colorlinks=true,
    linkcolor=blue,
    filecolor=magenta,      
    urlcolor=black,
    citecolor=teal
}
\theoremstyle{plain}

\newtheorem{theorem}{Theorem}[section]

\newtheorem{lemma}[theorem]{Lemma}
\newtheorem{prop}[theorem]{Proposition}

\theoremstyle{definition}

\newtheorem{remark}[theorem]{Remark}

\DeclareMathOperator{\inv}{inv}
\newcommand{\lin}[1]{\langle #1 \rangle}
\newcommand{\lra}{\longrightarrow}

\newcommand{\bb}[1]{\mathbb{#1}}
\newcommand{\mc}[1]{\mathcal{#1}}
\newcommand{\wtl}[1]{\widetilde{#1}}

\newcommand{\Z}{\bb{Z}}
\newcommand{\Q}{\bb{Q}}

\newcommand{\T}{\bb{T}}
\newcommand{\cT}{\mathcal{T}}

\newcommand{\m}{\mathbf{m}}

\newcommand{\bl}{\bullet}

\newcommand{\bd}{\partial}

\newcommand{\sgn}{\mathrm{sgn}}
\newcommand{\rk}{\mathrm{rk}}

\newcommand{\id}{\mathrm{id}}

\newcommand{\Set}{\mathrm{Set}}

\newcommand{\St}{\mathrm{St}}
\newcommand{\Sh}{\mathrm{Sh}}

\newcommand{\ST}{\mathrm{ST}}

\usepackage[OT2,T1]{fontenc}
\DeclareSymbolFont{cyrletters}{OT2}{wncyr}{m}{n}
\DeclareMathSymbol{\Sha}{\mathalpha}{cyrletters}{"58}

\newcommand{\Tot}{\mathrm{Tot}}

\newcommand{\op}{\mathrm{op}}
\newcommand{\es}{\mathrm{es}}

\newcommand{\llb}{\llbracket}
\newcommand{\rrb}{\rrbracket}

\newcommand{\Stab}{\mathrm{Stab}}
\newcommand{\Ind}{\mathrm{Ind}}

\newcommand{\GL}{\mathrm{GL}}

\newcommand{\EQ}{^Q \! E}

\title[Hopf algebra structures on Steinberg homology]{The Quillen and sharbly Hopf algebra structures on Steinberg homology coincide}
\author{Urshita Pal}
\author{Sam Payne}
\date{\today}

\begin{document}

\begin{abstract}
Two recent constructions independently give bigraded commutative Hopf algebra structures on the homology of $\GL(\Z)$ with rational Steinberg coefficients. One, by Ash--Miller--Patzt, uses the sharbly resolution. The other, by Brown--Chan--Galatius--Payne, uses the Quillen spectral sequence and rank-filtered maps on $BK(\Z)$. We prove that the two structures coincide. Both are induced by join and parabolic restriction operations on Steinberg modules.
\end{abstract}

\maketitle

\setcounter{tocdepth}{1}
\tableofcontents

\section{Introduction}\label{sec:intro}

Let $\St_n$ denote the rational Steinberg module of $\GL_n(\Q)$, i.e., the reduced rational homology of the Tits building of $\Q^n$. By convention $\St_0 = \Q$. Consider the bigraded vector space
\[ \mc{H} \;=\; \bigoplus_{n,k\ge 0} H_k\bigl(\GL_n(\Z);\, \St_n\bigr), \]
in which $H_k(\GL_n(\Z); \St_n)$ has bidegree $(n,k)$. The recent literature contains two  constructions of bigraded commutative Hopf algebra structures on $\mc{H}$. Ash--Miller--Patzt \cite{ash2024hopf} define a product and a coproduct by explicit formulas on the sharbly resolution of the Steinberg modules. We call the resulting structure the \emph{sharbly Hopf structure}. Brown--Chan--Galatius--Payne \cite{brown2024hopf} construct rank-filtered product and coproduct maps on the Waldhausen model of $BK(\Z)$, inducing a bigraded Hopf algebra structure on each page of the Quillen spectral sequence $\EQ^{*}_{*, *}$. We identify $\EQ^1_{*,*} \cong \mc{H}$ by an isomorphism essentially due to Quillen; see \cite[Proposition 2.15]{brown2024hopf} and Section~\ref{sec:rigidified} for the precise identification. We call the Hopf algebra structure on $\mc{H}$ induced by this identification the \emph{Quillen Hopf structure}.

\begin{theorem}\label{thm:main}
The sharbly Hopf structure and the Quillen Hopf structure on $\mc{H}$ coincide. 
\end{theorem}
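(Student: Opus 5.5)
The plan is to show that both structures are obtained from the same two chain-level operations on Steinberg modules, one for the product and one for the coproduct. The product comes from the \emph{join} map $\St_a\otimes\St_b\to\St_{a+b}$, induced by the direct sum $\Q^a\oplus\Q^b=\Q^{a+b}$ on Tits buildings. The coproduct comes from \emph{parabolic restriction} $\St_{a+b}\to \Ind\bigl(\St_a\otimes\St_b\bigr)$, which is the boundary/restriction map to the link of a rank-$a$ subspace (the apartment-class formula). The argument therefore reduces to an explicit description of each construction on homology.

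On the sharbly side I would rewrite the Ash--Miller--Patzt formulas in these terms. Their product on sharblies is induced by concatenating bases (direct sum of frames). Under the augmentation $\Sh_0\to\St$, which sends a frame to its apartment class, this is the join. Their coproduct sums over splittings of a sharbly's vectors into two spans. On $\Sh_0$ this is exactly Ash--Rudolph/Lee--Szczarba type parabolic restriction. I would check that both formulas are maps of resolutions covering the join and restriction maps on $\St$. Then, by the comparison theorem for group homology with coefficients, the induced maps on $H_*(\GL(\Z);\St)$ depend only on the coefficient maps together with the maps of groups $\GL_a\times\GL_b\to\GL_{a+b}$ and the transfer/Shapiro step for the parabolic. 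This step should be routine bookkeeping of signs and orientations.

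On the Quillen side I would use the rigidified identification $\EQ^1_{n,*}\cong H_*(\GL_n(\Z);\St_n)$ of Section~\ref{sec:rigidified}. Here $\EQ^1$ is the relative homology of the rank filtration on the Waldhausen model, i.e.\ the homology of $\GL_n(\Z)$ with coefficients in the reduced homology of the quotient of the filtration quotient, which is a suspension of the building. The rank-filtered product is induced by direct sum of filtered objects. On associated graded pieces this is the suspension of the join of buildings, so under the identification it gives the join. The rank-filtered coproduct is induced by the $S_\bullet$-construction sending a flag to its subquotients. On the associated graded, restricted to rank $n$, it collapses cells onto the strata indexed by a single splitting $a+b=n$. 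This yields the map from the top homology of the building to the product of the link building and the quotient building, i.e.\ parabolic restriction, followed by Shapiro's lemma.

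The main obstacle I expect is the coproduct on the Quillen side. The Waldhausen model involves flags of all lengths, and the rigidified isomorphism passes through several identifications: the suspension, the Solomon--Tits equivalence, and the Shapiro isomorphism, each of which introduces signs. I would first compute the coproduct on a single apartment class $[v_1,\dots,v_n]$ in the reduced homology of the filtration quotient. Then I would show that it equals $\sum\pm[v_S]\otimes[v_{S^c}]$ over subsets $S$ of size $a$, and match the signs with the shuffle signs in the Ash--Miller--Patzt coproduct. Since apartment classes generate $\St_n$ and both maps are equivariant, agreement on them suffices. Combining this with the naturality of both constructions and the comparison theorem then gives Theorem~\ref{thm:main}.
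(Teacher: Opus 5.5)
Your proposal takes essentially the same approach as the paper. It shows that both structures are equivariant chain-level lifts of the same join map $\mu$ and parabolic-restriction map $\delta$, whose apartment formula carries the shuffle signs $\sgn(\sigma_S)$: on the sharbly resolution for one structure, on the rigidified Waldhausen complex for the other. The comparison and Shapiro lemmas then finish the argument. The paper routes this through an intermediate Steinberg product and coproduct on $\mc{H}$, and it supplies details your sketch leaves implicit: the Quillen coproduct is computed on apartments through an explicit cycle in the edgewise subdivision, identified via $\varphi_0$; the $(-1)^{mj}$ total-degree K\"unneth twist is tracked; and units, counits and antipodes are obtained by uniqueness.
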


\noindent In other words, their respective products, coproducts, units, counits, and antipodes are equal as maps of bigraded vector spaces. To prove this, we show that both structures are induced by a single pair of equivariant maps on Steinberg modules. The first is the join product
\[ \mu\colon\; \St_m \otimes \St_n \lra \St_{m+n}, \]
which sends a tensor product of apartment classes to the apartment class of the concatenated basis (Proposition~\ref{prop:product}). The second is the parabolic restriction coproduct
\begin{equation}\label{eq:coprod-sum}
\delta\colon\; \St_n \lra \bigoplus_{U \subseteq \Q^n} \St(U) \otimes \St(\Q^n/U),
\end{equation}
given on apartment classes by
\begin{equation} \label{eq:coproduct}
    \delta\llb v_1,\dots,v_n \rrb \;=\; \sum_{S \subseteq \{ 1, \ldots, n \}} \sgn(\sigma_S)\; \llb v_{i_1},\dots,v_{i_p} \rrb \otimes \llb \bar v_{j_1},\dots,\bar v_{j_{n-p}} \rrb. 
\end{equation}     
Here, the elements of $S$ and its complement are ordered so that $i_1 < \cdots < i_p$ and $j_1 < \cdots < j_{n-p}$, and $\sigma_S := (i_1, \ldots, i_p, j_1, \ldots, j_{n-p})$ is the corresponding shuffle permutation; the term of \eqref{eq:coproduct} indexed by $S$ lives in the summand of \eqref{eq:coprod-sum} for $U = \langle v_{i_1}, \ldots, v_{i_p} \rangle$. This formula serves as a bridge between the Quillen and sharbly coproducts. The sharbly coproduct of Ash--Miller--Patzt applied to a sharbly of minimal size is given by essentially the same expression, while in the Quillen coproduct constructed by Brown--Chan--Galatius--Payne, it emerges from edgewise subdivision of $BK(\Z)$ and passage to the associated graded of the rank filtration (Proposition~\ref{prop:coproduct}).

The maps $\mu$ and $\delta$ induce a Steinberg product and coproduct on $\mc{H}$, as explained in Section~\ref{sec:reference}.  In Section~\ref{sec:amp}, we show that the sharbly product and coproduct are equivariant lifts of $\mu$ and $\delta$ to the sharbly resolution (Theorem~\ref{thm:amp}), and conclude that they agree with the Steinberg product and coproduct. In Section~\ref{sec:bcgp}, we show that the product and coproduct of the Quillen Hopf structure are equivariant lifts of $\mu$ and $\delta$ to a bounded-below complex of free modules with homology $\St_n$ concentrated in degree $n$, obtained from a rigidification of the Waldhausen construction. Via a homological comparison result we deduce that they also coincide with the Steinberg product and coproduct.  Care is required to keep track of signs; we follow the Koszul sign convention throughout. 


\medskip

\noindent{\textbf{Acknowledgements.}} UP thanks Jeremy Miller and Peter Patzt for helpful discussions on the parabolic restriction coproduct. SP is supported in part by NSF DMS--2542134 and a Simons Fellowship.

\section{Preliminaries}\label{sec:prelim}

In this section, we recall some basic notions that will be used throughout. We start with Steinberg modules and their apartment classes, the simplicial model of the doubly suspended Tits building, and a few simplicial tools: edgewise subdivision, and the Eilenberg--Zilber and Alexander--Whitney maps. The section concludes with a comparison lemma from homological algebra.

\subsection{Buildings and Steinberg modules} \label{sec:tools}

Let $V$ be a finite-dimensional $\Q$-vector space. The \emph{Tits building} $\T(V)$ is the geometric realization of the poset of nonzero proper subspaces of $V$, ordered by inclusion. It carries an action of $\GL(V)$, and is homotopy equivalent to a wedge of spheres of dimension $n-2$, where $n := \dim_\Q V$. We refer the reader to \cite[Chapter~IV]{brown1989buildings} for further details.

The \emph{rational Steinberg module} is $$\St(V) := \wtl{H}_{n-2}(\T(V); \Q).$$ The \emph{apartment} associated to an ordered basis $v_1, \dots, v_n$ is the $(n-2)$-cycle 
\[ \sum_{\sigma \in \Sigma_n} \sgn(\sigma)\,\bigl\{\lin{v_{\sigma(1)}} \subset \lin{v_{\sigma(1)}, v_{\sigma(2)}} \subset \cdots \subset \lin{v_{\sigma(1)}, \dots, v_{\sigma(n-1)}}\bigr\}, \]
and $\St(V)$ is generated by the classes of such apartments. Note that $\T(\Q)$ is empty, so $\St(\Q) = \wtl{H}_{-1}(\emptyset) \cong \Q$.  Also, by convention, $\St(0) := \Q$.

Many of our computations with Steinberg modules will be carried out in the following simplicial model of the double suspension of the building. Consider the simplicial set whose $k$-simplices are the flags $0 \subseteq V_0 \subseteq V_1 \subseteq \cdots \subseteq V_k \subseteq V$ of subspaces of $V$, with face maps $d_i$ deleting $V_i$ and degeneracies $s_i$ repeating $V_i$, for $0 \leq i \leq k$. Let $\Sigma\ST(V)$ be the pointed simplicial set obtained by collapsing the simplicial subset of flags with $V_0 \neq 0$ or $V_k \neq V$ to a distinguished basepoint. (When $V = 0$ this subset is empty, and the quotient is understood to adjoin a disjoint basepoint, so that $\Sigma\ST(0) \cong S^0$). In comparison with $\T(V)$, allowing $V_0 = 0$ cones off the building, and allowing $V_k = V$ cones off the result; collapsing the flags where either one of the equalities doesn't hold realizes $\Sigma\ST(V)$ as the double unreduced suspension of $\T(V)$. The reduced homology of $\Sigma\ST(V)$ is concentrated in degree $n$, and there is a $\GL(V)$-equivariant isomorphism $\wtl{H}_n(\Sigma\ST(V)) \cong \St(V)$. We write
\begin{equation} \label{eq:apartment}
\llb v_1, \dots, v_n \rrb \;:=\; \sum_{\sigma \in \Sigma_n} \sgn(\sigma)\, x_\sigma, \qquad x_\sigma := \bigl(0 \subset \lin{v_{\sigma(1)}} \subset \cdots \subset \lin{v_{\sigma(1)}, \dots, v_{\sigma(n-1)}} \subset V\bigr),
\end{equation}
for the apartment class in $\wtl{H}_n(\Sigma\ST(V))$ associated to an ordered basis $v_1, \dots, v_n$; here $x_\sigma$ is the nondegenerate $n$-simplex of $\Sigma\ST(V)$ on the indicated flag.

\subsection{Edgewise subdivision.} Let $\mathbf{\Delta}$ denote the category of finite nonempty totally ordered sets and order-preserving maps. When no confusion is possible, we tacitly replace $\mathbf{\Delta}$ with the skeletal subcategory with objects $[p] := \{0 < 1 < \cdots < p\}$, for nonnegative integers $p$.

A simplicial set is a functor $X\colon \mathbf{\Delta}^{\op} \to \Set$, and we write $X_p := X([p])$. Its \emph{edgewise subdivision}, denoted $\es(X)$, is given by composition with the functor $\mathbf{\Delta}^{\op} \to \mathbf{\Delta}^{\op}$ induced by $$[p] \mapsto [p] \sqcup [p] \cong [2p+1],$$ the ordered concatenation. Thus, $\es(X)_p = X_{2p+1}$. 

Following \cite[Section~3]{brown2024hopf}, to which we refer the reader for details and further references, we recall that there is a natural homeomorphism of geometric realizations $|\es(X)| \cong |X|$, affine on simplices, sending each vertex of $\es(X)_0 = X_1$ to the midpoint of the corresponding edge. 

For a bisimplicial set $Y$, i.e., a functor $Y \colon \mathbf{\Delta}^{\op} \times \mathbf{\Delta}^{\op} \to \Set$, we write $\es(Y)$ for edgewise subdivision in the first simplicial direction, i.e., $\es(Y)_{p,q} := Y_{2p+1,q}$. Again the geometric realization $|\es(Y)|$ is naturally homeomorphic to $|Y|$.

\subsection{Chains, Eilenberg--Zilber, and Alexander--Whitney maps.}
\label{sec:chains}

For a simplicial set $X$, let $C_n(X)$ be the normalized rational chain group, i.e., the quotient of the $\Q$-vector space on $X_n$ by the subspace spanned by degenerate simplices. Its differential is $\partial=\sum_i (-1)^i d_i$. If $X$ is pointed, let $\wtl{C}_*(X) := C_*(X)/C_*(*)$. Thus $\wtl{C}_*(X)$ is obtained by setting the class of the basepoint to zero. For a bisimplicial set, we take normalized chains in both simplicial directions. If $A_{p,q}$ is the resulting first-quadrant double complex, with horizontal and vertical differentials $d_h$ and $d_v$, we use the totalization convention
\[
\Tot_r(A)=\bigoplus_{p+q=r}A_{p,q},\qquad d_{\Tot}=d_h+(-1)^p d_v \quad\text{on }A_{p,q}.
\]
When Eilenberg--Zilber or Alexander--Whitney is applied in both simplicial directions, we use the Koszul interchange
\[
(A\otimes B)\otimes(C\otimes D)\longrightarrow(A\otimes C)\otimes(B\otimes D),\qquad (a\otimes b)\otimes(c\otimes d)\longmapsto(-1)^{|b||c|}(a\otimes c)\otimes(b\otimes d).
\]
Thus, on elements of bidegrees $(p,q)$ and $(r,s)$, the interchange contributes the sign $(-1)^{qr}$.

For $p,q \geq 0$, let $\mathfrak{S}(p,q)$ be the set of $(p,q)$-shuffles, i.e., the set of pairs
\[
(a,b)=(a_1<\cdots<a_p,\; b_1<\cdots<b_q)
\]
such that $\{ 1, \ldots, p+q \} = \{ a_1, \ldots, a_p\} \sqcup \{ b_1, \ldots, b_q \}$. 

We write $\rho_{a,b} \in \Sigma_{p+q}$ for the permutation given by $a_j \mapsto j$ and $b_j \mapsto p + j$. Note that $\rho_{a,b}$  is the inverse of the shuffle permutation $(a_1, \ldots, a_p, b_1, \ldots, b_q)$; in particular, $\rho_{a,b}$ and the shuffle permutation given by $(a,b)$ have the same sign, which we denote $\sgn(a,b)$.  Composition of permutations $(\sigma,\tau,(a,b)) \mapsto (\sigma\oplus\tau)\circ\rho_{a,b}$ gives a bijection from $\Sigma_p\times\Sigma_q\times\mathfrak{S}(p,q)$ to $\Sigma_{p+q}$, and  
\[
\sgn(\sigma)\cdot\sgn(\tau)\cdot\sgn(a,b) \;=\; \sgn\bigl((\sigma\oplus\tau)\circ\rho_{a,b}\bigr).
\]
Recall that $s_i$ denotes the $i$th degeneracy map on a simplicial set.  Consider the iterated degeneracies
\[
s_b=s_{b_q-1}\cdots s_{b_1-1},
\quad \mbox{and} \quad
s_a=s_{a_p-1}\cdots s_{a_1-1}.
\]
The Eilenberg--Zilber map $\mathrm{EZ} \colon C_*(X) \otimes C_*(Y) \to C_*(X \times Y)$ is given by
\[
\mathrm{EZ}(x\otimes y)
 =
 \sum_{(a,b)\in \operatorname{Sh}(p,q)} \sgn(a,b)\ (s_bx,s_ay), \quad \mbox{ for } \quad (x,y) \in X_p \times Y_q.
\]

For an $n$-simplex $z$ and $p+q=n$, set
\[
\operatorname{fr}_p(z):=d_{p+1}\cdots d_n z,
\quad \mbox{ and } \quad
\operatorname{bk}_q(z):=d_0^{\,p}z .
\]
These are the front $p$-face and the back $q$-face of $z$. The Alexander-Whitney map $\mathrm{AW}$ from \mbox{$C_*(X \times Y)$} to $C_*(X) \otimes C_*(Y)$ is given by
\[
\mathrm{AW}(x,y) = \sum_{p+q=n} \operatorname{fr}_p(x)\otimes \operatorname{bk}_q(y), \quad \mbox{ for } \quad (x,y)\in X_n\times Y_n .
\]

The same formulas induce maps on reduced chains of pointed simplicial sets, similarly denoted
\[
\mathrm{EZ}\colon \wtl{C}_*(X)\otimes \wtl{C}_*(Y) \longrightarrow \wtl{C}_*(X\wedge Y),
\quad \mbox{ and } \quad 
\mathrm{AW}\colon \wtl{C}_*(X\wedge Y) \longrightarrow \wtl{C}_*(X)\otimes \wtl{C}_*(Y).
\]
The induced maps on homology are mutually inverse \cite[\S 8.5]{weibel1994introduction}. 

\subsection{Two lemmas in group homology}\label{sec:comparison}

Let $G$ be a group and let $M$ be a $\Q[G]$-module. We regard group homology as the left derived functor of coinvariants,
\[
H_*(G;M)=\operatorname{Tor}^{\Q[G]}_*(\Q,M)=H_*\bigl(\Q\otimes^{\mathbf L}_{\Q[G]}M\bigr),
\]
computed by tensoring any flat resolution of $M$ with $\Q$ over $\Q[G]$ \cite[Lemma~3.2.8]{weibel1994introduction}. More generally, for a bounded-below complex $T_\bl$ of $\Q[G]$-modules, set $H_*(G;T_\bl):=H_*\bigl(\Q\otimes^{\mathbf L}_{\Q[G]}T_\bl\bigr)$. If the terms of $T_\bl$ are flat, then the coinvariant complex $(T_\bl)_G:=\Q\otimes_{\Q[G]}T_\bl$ computes $H_*(G;T_\bl)$.

Let $\alpha\colon G'\to G$ be a group homomorphism and let $f\colon M'\to M$ be an $\alpha$-equivariant map. Write $\alpha^*M$ for $M$ with its induced $\Q[G']$-action. The natural map $(\alpha^*M)_{G'}\to M_G$, given by $1\otimes m\mapsto 1\otimes m$, derives to a morphism which, together with $f$, gives the induced map on homology, denoted $(\alpha,f)_*\colon H_*(G';M')\to H_*(G;M)$. For a module $M$ and an integer $i$, let $M[-i]$ denote the complex given by $M$ in degree $i$.  Let $D(R)$ denote the derived category of complexes of $R$-modules. 

\medskip


\begin{lemma}\label{lem:comparison}
Assume $T'_\bl$ and $T_\bl$ are bounded below complexes of flat modules, both with homology concentrated in degree $N$. Let $M':=H_N(T'_\bl)$ and $M:=H_N(T_\bl)$. There are natural isomorphisms
\[
(T'_\bl)_{G'}\simeq\bigl(\Q\otimes^{\mathbf L}_{\Q[G']}M'\bigr)[-N],\qquad (T_\bl)_G\simeq\bigl(\Q\otimes^{\mathbf L}_{\Q[G]}M\bigr)[-N]
\]
in $D(\Q)$. If $\tau\colon T'_\bl\to T_\bl$ is $\alpha$-equivariant and induces $f\colon M'\to M$ on $H_N$, then the map on coinvariant complexes induced by $\tau$ is the derived-coinvariant map associated to $(\alpha,f)$. Consequently,
\[
H_{N+k}\bigl((T'_\bl)_{G'}\bigr)\cong H_k(G';M'),\qquad H_{N+k}\bigl((T_\bl)_G\bigr)\cong H_k(G;M),
\]
and $(\alpha,f)_*$ is the map induced by $\tau$ on homology.
\end{lemma}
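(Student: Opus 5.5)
The plan is to reduce everything to the standard fact that a bounded-below complex with homology in a single degree is quasi-isomorphic to that homology, shifted. Since $T_\bl$ is bounded below with $H_i(T_\bl)=0$ for $i\neq N$, the truncations give a zigzag of quasi-isomorphisms
\[
T_\bl \;\longleftarrow\; \tau_{\ge N}T_\bl \;\longrightarrow\; H_N(T_\bl)[-N] = M[-N],
\]
where $\tau_{\ge N}T_\bl$ is $T_i$ for $i>N$, the cycles $Z_N$ in degree $N$, and zero below. Both maps are $\Q[G]$-linear and natural in equivariant chain maps. The same holds for $T'_\bl$ and $M'$.

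Since $T_\bl$ is a bounded-below complex of flat modules, $(T_\bl)_G=\Q\otimes_{\Q[G]}T_\bl$ computes $\Q\otimes^{\mathbf L}_{\Q[G]}T_\bl$. This is the standard fact that bounded-below flat complexes are K-flat; concretely, one compares with a projective resolution $P_\bl\to T_\bl$ via the mapping cone, which is bounded below, flat and acyclic, hence stays acyclic after tensoring. Applying $\Q\otimes^{\mathbf L}_{\Q[G]}(-)$ to the zigzag then gives the isomorphism
\[
(T_\bl)_G\simeq\bigl(\Q\otimes^{\mathbf L}_{\Q[G]}M\bigr)[-N]
\]
in $D(\Q)$. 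It is natural because the zigzag is functorial. Taking $H_{N+k}$ gives $H_k(G;M)$.

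For the equivariance statement, write $\tau\colon T'_\bl\to\alpha^*T_\bl$ as a $\Q[G']$-linear map. The induced map on coinvariants factors as
\[
(T'_\bl)_{G'}\lra(\alpha^*T_\bl)_{G'}\lra(T_\bl)_G .
\]
Truncation commutes with $\tau$, and on $H_N$ the map $\tau$ is $f$. So the square formed by the two zigzags and the maps $\tau$, $\tau_{\ge N}\tau$, $f[-N]$ commutes on the nose. The only subtle point is that $\alpha^*T_\bl$ need not be flat over $\Q[G']$. To handle this, choose a flat (e.g.\ projective) resolution $Q_\bl\to T'_\bl$ over $\Q[G']$ and a projective resolution $P_\bl\to T_\bl$ over $\Q[G]$. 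Lift $\tau$ to an $\alpha$-equivariant map $Q_\bl\to P_\bl$; the lift exists because $Q_\bl$ is projective over $\Q[G']$ and $P_\bl\to\alpha^*T_\bl$ is a surjective quasi-isomorphism, and it is unique up to homotopy. The derived-coinvariant map associated to $(\alpha,f)$ is by definition the map $Q_{G'}\to P_G$ obtained from such a lift compatible with $f$ through the truncation zigzag. Since $Q\to T'$ and $P\to T$ become quasi-isomorphisms on coinvariants by the flatness argument above, the maps induced by $\tau$ and by the lift agree in $D(\Q)$.

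I expect the main obstacle to be this bookkeeping: checking that the lift, the truncations and the coinvariant maps form a diagram that commutes up to homotopy. Once that holds, the identification of $(\alpha,f)_*$ with $H_*$ of the map induced by $\tau$ follows by taking homology.
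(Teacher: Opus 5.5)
Your proposal is correct and follows essentially the same route as the paper: truncate to $M[-N]$, use that coinvariants of a bounded-below flat complex compute the derived tensor product, and compare $\tau$ with the derived map by passing to resolutions whose augmentations stay quasi-isomorphisms on coinvariants by flatness. The only difference is that the paper uses functorial bar resolutions, on which $\alpha$ and $\tau$ induce a map directly, whereas you choose projective resolutions and lift $\tau$ using the lifting property of bounded-below projective complexes; this adds a uniqueness-up-to-homotopy check but is otherwise equivalent.
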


\begin{proof}
Canonical truncation gives functorial isomorphisms $T'_\bl\simeq M'[-N]$ in $D(\Q[G'])$ and $T_\bl\simeq M[-N]$ in $D(\Q[G])$. Ordinary tensor product with a bounded-below complex of flat modules computes the derived tensor product, by \cite[Lemma~10.6.2 and Theorem~10.6.3]{weibel1994introduction}. Hence
\[
(T_\bl)_G=\Q\otimes_{\Q[G]}T_\bl\simeq\Q\otimes^{\mathbf L}_{\Q[G]}T_\bl\simeq\Q\otimes^{\mathbf L}_{\Q[G]}M[-N]=\bigl(\Q\otimes^{\mathbf L}_{\Q[G]}M\bigr)[-N],
\]
and similarly over $G'$. To verify the assertion about $\tau$, use the standard functorial bar resolutions. The homomorphism $\alpha$ and the chain map $\tau$ give a map between the corresponding bar double complexes. These complexes compute the two derived tensor products, and their augmentations to $(T'_\bl)_{G'}$ and $(T_\bl)_G$ are quasi-isomorphisms because the terms of $T'_\bl$ and $T_\bl$ are flat. Since $H_N(\tau)=f$, canonical truncation identifies the map of bar complexes with the derived-coinvariant map associated to $(\alpha,f)$. The asserted naturality and the formulas on homology follow.
\end{proof}

The next lemma is a related statement for chain-level lifts, when the terms of $T'_\bl$ are projective. Let $K(R)$ denote the homotopy category of complexes of $R$-modules.

\begin{lemma}\label{lem:chain-lifting}
Assume $T'_\bl$ is a bounded-below complex of projective $\Q[G']$-modules and $T_\bl$ is a bounded below complex of $\Q[G]$-modules, both with homology concentrated in degree $N$. Then
\[
H_N\colon\operatorname{Hom}_{K(\Q[G'])}(T'_\bl,\alpha^*T_\bl)\xrightarrow{\;\cong\;}\operatorname{Hom}_{\Q[G']}\bigl(H_N(T'_\bl),\alpha^*H_N(T_\bl)\bigr).
\]
Thus every $\alpha$-equivariant map $H_N(T'_\bl) \to H_N(T_\bl)$ has an $\alpha$-equivariant chain-level lift, and the lift is unique up to $\alpha$-equivariant chain homotopy.
\end{lemma}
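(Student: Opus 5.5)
The plan is to reduce to the standard comparison theorem for projective complexes, applied after replacing $T_\bl$ by its homology placed in degree $N$. Write $P:=T'_\bl$ and $Q:=\alpha^*T_\bl$. Restriction along $\alpha$ is exact and preserves chain homotopies, so $Q$ is a bounded-below complex of $\Q[G']$-modules with homology $\alpha^*H_N(T_\bl)$ concentrated in degree $N$. Every $\alpha$-equivariant chain map $T'_\bl\to T_\bl$ is exactly a $\Q[G']$-linear chain map $P\to Q$, and likewise for homotopies. This reduces the statement to the case $G'=G$, $\alpha=\id$.

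Since $P$ is a bounded-below complex of projectives, it is K-projective. Hence $\operatorname{Hom}_{K(\Q[G'])}(P,-)$ sends quasi-isomorphisms to bijections, and
\[
\operatorname{Hom}_{K}(P,Q)\cong\operatorname{Hom}_{D}(P,Q).
\]
We use the canonical truncation zigzag of quasi-isomorphisms
\[
Q\;\longleftarrow\;\tau_{\ge N}Q\;\longrightarrow\;H_N(Q)[-N].
\]
It identifies $\operatorname{Hom}_K(P,Q)$ with $\operatorname{Hom}_K(P,H_N(Q)[-N])$. I would check that this identification is compatible with the map $H_N$ in the statement. It is, because each map in the zigzag induces the identity on $H_N$.

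It remains to compute $\operatorname{Hom}_K\bigl(P,M[-N]\bigr)$ for $M:=H_N(Q)$. A chain map $P\to M[-N]$ is a $\Q[G']$-map $\phi\colon P_N\to M$ vanishing on $\bd(P_{N+1})$, i.e. a map $P_N/\bd P_{N+1}\to M$. Two such maps are homotopic iff their difference is of the form $h\circ\bd$ with $h\colon P_{N-1}\to M$.

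Next, $P$ is exact below degree $N$ and bounded below, so the complex
\[
\cdots\to P_{N-1}\to P_{N-2}\to\cdots\to 0
\]
is an exact complex of projectives that is bounded below. It therefore splits, and all the sequences $0\to Z_j\to P_j\to Z_{j-1}\to 0$ for $j\le N-1$ split. In particular $B_{N-1}=\bd(P_N)=Z_{N-1}$ is a direct summand of $P_{N-1}$ and is projective. Consequently $\operatorname{Hom}(P_{N-1},M)\to\operatorname{Hom}(B_{N-1},M)$ is surjective.

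The null-homotopic maps are therefore exactly those factoring through $\bd\colon P_N/\bd P_{N+1}\to B_{N-1}$. Since $B_{N-1}$ is projective, the sequence
\[
0\to H_N(P)\to P_N/\bd P_{N+1}\to B_{N-1}\to 0
\]
splits, and we obtain
\[
\operatorname{Hom}_K(P,M[-N])\cong\operatorname{Hom}(H_N(P),M).
\]
This bijection is given by restricting $\phi$ to cycles, which is precisely $H_N$.

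The main obstacle I expect is the bookkeeping in this last step: the splitting of the exact lower part of $P$ must be justified using bounded-belowness, by an inductive splitting from the bottom degree upward. The existence and uniqueness-up-to-homotopy of lifts then follows immediately from the bijection.
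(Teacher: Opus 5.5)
Your proposal is correct, but its second half takes a different route from the paper's.

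\textbf{The paper's route.} It works entirely in $D(\Q[G'])$:
\begin{enumerate}
\item It uses that $T'_\bl$ is K-projective to identify $\operatorname{Hom}_K$ with $\operatorname{Hom}_D$.
\item It replaces \emph{both} $T'_\bl$ and $\alpha^*T_\bl$ by their homology placed in degree $N$, via canonical truncation.
\item It concludes by the full faithfulness of $M\mapsto M[-N]$ from modules into the derived category.
\end{enumerate}

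\textbf{Your route.} You use K-projectivity only to swap the target for $H_N(Q)[-N]$ along the truncation zigzag. You then compute $\operatorname{Hom}_K(P,M[-N])$ by hand:
\begin{enumerate}
\item Chain maps are maps $P_N/\bd P_{N+1}\to M$.
\item Null-homotopic maps are those of the form $h\circ\bd$.
\item The inductive bottom-up splitting makes $B_{N-1}=Z_{N-1}$ a projective direct summand of $P_{N-1}$.
\item This identifies the quotient with $\operatorname{Hom}(H_N(P),M)$.
\end{enumerate}

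\textbf{Comparison.} The paper's argument is shorter and purely formal. It leaves implicit, though this is standard, that the resulting bijection is literally $H_N$. Your argument checks this explicitly: each truncation map induces the identity on $H_N$, and the final bijection is restriction to cycles. It also shows that on the source side you use only projectivity, boundedness below, and exactness in degrees below $N$. The hypothesis $H_{>N}=0$ enters only for the target.

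\textbf{One wording point.} As written, the complex $\cdots\to P_{N-1}\to P_{N-2}\to\cdots\to 0$ is not exact at $P_{N-1}$. What is exact is the augmented complex $0\to B_{N-1}\to P_{N-1}\to\cdots\to P_m\to 0$, where $m$ is the lowest nonzero degree. Your induction on the split sequences $0\to Z_j\to P_j\to Z_{j-1}\to 0$ for $j\le N-1$ is exactly what is needed.
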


\begin{proof}
Morphisms from a bounded-below complex of projective modules in the homotopy and derived categories agree, by  \cite[Corollary~10.4.7]{weibel1994introduction}. Canonical truncation gives $T'_\bl\cong H_N(T'_\bl)[-N]$ and $\alpha^*T_\bl\cong\alpha^*H_N(T_\bl)[-N]$ in $D(\Q[G'])$. Therefore
\begin{align*}
\operatorname{Hom}_{K(\Q[G'])}(T'_\bl,\alpha^*T_\bl)&\cong\operatorname{Hom}_{D(\Q[G'])}(T'_\bl,\alpha^*T_\bl),\\
&\cong\operatorname{Hom}_{D(\Q[G'])}\bigl(H_N(T'_\bl)[-N],\alpha^*H_N(T_\bl)[-N]\bigr),
\end{align*}
which is $\operatorname{Hom}_{\Q[G']}\bigl(H_N(T'_\bl),\alpha^*H_N(T_\bl)\bigr)$, and the result follows.
\end{proof}

\begin{remark}
For $N=0$, Lemma~\ref{lem:comparison} is the usual comparison result for maps of flat resolutions, which is used for the sharbly resolution in Section~\ref{sec:amp}. Its shifted form is used for the rigidified Waldhausen complex in Section~\ref{sec:rigidified}. Lemma~\ref{lem:chain-lifting} supplies the chain-level lifts and homotopies used in the proof of Lemma~\ref{lem:lift}.
\end{remark}

\section{The Steinberg product and coproduct}\label{sec:reference}

This section constructs the product and coproduct to which those in the Quillen and sharbly Hopf structures will be compared. We define the join product $\mu$ and the parabolic restriction coproduct $\delta$ on the Steinberg modules, compute each on apartment classes (Propositions \ref{prop:product} and \ref{prop:coproduct}), and describe the induced product and coproduct on $\mc{H}$ (Section~\ref{sec:induced}). Only the apartment coproduct formula (Proposition~\ref{prop:coproduct}) requires a proof of any length.

\subsection{A product on Steinberg modules} \label{sec:product}

Recall from Section~\ref{sec:tools} that $\Sigma\ST(V)$ denotes the doubly suspended Tits building of a rational vector space $V$.  We write $\Sigma\ST_n := \Sigma\ST(\Q^n)$. The reduced homology of $\Sigma\ST_n$ is the Steinberg module, concentrated in degree $n$, denoted $\St_n := \St(\Q^n)$.

Let $\phi\colon \Q^m \to \Q^{m+n}$ and $\psi\colon \Q^n \to \Q^{m+n}$ be the embeddings onto the first $m$ and the last $n$ coordinates. The \emph{flag-sum map} is the map of pointed simplicial sets
\[ \theta_{m,n}\colon\; \Sigma\ST_m \times \Sigma\ST_n \lra \Sigma\ST_{m+n}, \qquad (V_\bl,\, W_\bl) \longmapsto \phi(V_\bl) + \psi(W_\bl). \]
If $V_0 \neq 0$ or $W_0 \neq 0$ then $\phi(V_0) + \psi(W_0) \neq 0$. Likewise, if $V_k \neq \Q^m$ or $W_k \neq \Q^n$ then $\phi(V_k) + \psi(W_k) \neq \Q^{m+n}$. Thus $\theta_{m,n}$ sends the collapsed subspace of either factor to $\ast$, and hence factors through the smash product $\Sigma\ST_m \wedge \Sigma\ST_n$.  Composing with $\mathrm{EZ}$, one obtains a reduced chain map
\[ 
\wtl{C}_*(\Sigma\ST_m) \otimes \wtl{C}_*(\Sigma\ST_n) \lra \wtl{C}_*(\Sigma\ST_{m+n}).
\]
The homology on each side is concentrated in degree $m+n$, and we denote the induced map
\[ \mu\colon\; \St_m \otimes \St_n \lra \St_{m+n}. \]
It is equivariant for the block diagonal embedding $\GL_m(\Q) \times \GL_n(\Q) \to \GL_{m+n}(\Q)$. 

\begin{prop}\label{prop:product}
For ordered bases $v_1, \dots, v_m$ of $\Q^m$ and $w_1, \dots, w_n$ of $\Q^n$,
\[ \mu\bigl(\llb v_1, \dots, v_m \rrb \otimes \llb w_1, \dots, w_n \rrb\bigr) \;=\; \llb \phi(v_1), \dots, \phi(v_m),\, \psi(w_1), \dots, \psi(w_n) \rrb. \]
\end{prop}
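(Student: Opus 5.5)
The plan is a direct chain-level computation. Since $\mu$ is induced by the chain map $\theta_{m,n}\circ\mathrm{EZ}$, and the apartment classes are represented by explicit cycles, it suffices to evaluate this chain map on the cycle $\llb v_1,\dots,v_m\rrb\otimes\llb w_1,\dots,w_n\rrb$ and show that the result equals the cycle $\llb \phi(v_1),\dots,\phi(v_m),\psi(w_1),\dots,\psi(w_n)\rrb$ in $\wtl{C}_{m+n}(\Sigma\ST_{m+n})$. We do not need to pass to homology.

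First, expand both factors:
\[
\sum_{\sigma\in\Sigma_m}\sum_{\tau\in\Sigma_n}\sgn(\sigma)\sgn(\tau)\, \mathrm{EZ}(x_\sigma\otimes y_\tau),
\]
where $x_\sigma$ is the $m$-simplex $0\subset\lin{v_{\sigma(1)}}\subset\cdots\subset\Q^m$, and $y_\tau$ is the analogous $n$-simplex for the $w$'s. Next, apply the EZ formula: for a shuffle $(a,b)\in\mathfrak{S}(m,n)$ we get the pair $(s_b x_\sigma, s_a y_\tau)$.

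The key step is to identify $\theta_{m,n}(s_b x_\sigma, s_a y_\tau)$. The degeneracies are arranged so that, in the combined flag of length $m+n$, the $V$-flag advances exactly at the positions $a_1,\dots,a_m$ and the $W$-flag advances exactly at the positions $b_1,\dots,b_n$. Concretely, the $k$-th subspace of $s_bx_\sigma$ is spanned by the $v_{\sigma(j)}$ with $a_j\le k$, and similarly for $W$ with $b_j\le k$. Summing, the $k$-th subspace of the image flag is spanned by the first $k$ vectors of the sequence $u_{\pi(1)},\dots,u_{\pi(m+n)}$. Here $u=(\phi(v_1),\dots,\phi(v_m),\psi(w_1),\dots,\psi(w_n))$ is the concatenated basis, and $\pi=(\sigma\oplus\tau)\circ\rho_{a,b}$, up to the convention of composing versus inverting, which I will check carefully. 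Thus $\theta_{m,n}(s_bx_\sigma,s_ay_\tau)=x_\pi$ is exactly the nondegenerate simplex in \eqref{eq:apartment} for the concatenated basis. It carries the sign $\sgn(\sigma)\sgn(\tau)\sgn(a,b)=\sgn(\pi)$.

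Finally, invoke the bijection $\Sigma_m\times\Sigma_n\times\mathfrak{S}(m,n)\to\Sigma_{m+n}$ recalled in Section~\ref{sec:chains}. Summing over all triples then gives $\sum_{\pi}\sgn(\pi)x_\pi$, which is the claimed apartment cycle. There are no degenerate or basepoint terms to discard, since every image flag is a full flag from $0$ to $\Q^{m+n}$.

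The main obstacle is the index bookkeeping in the key step. One must verify that $s_b=s_{b_n-1}\cdots s_{b_1-1}$ makes the $V$-flag jump precisely at the indices $a_j$, and not at the $b_j$. One must also check that the resulting permutation is $(\sigma\oplus\tau)\circ\rho_{a,b}$ rather than its inverse, or the inverse composite; note that the sign is unaffected in any case, so only the bijection matters. I would first check the cases $m=n=1$ and $m=1,n=2$ by hand to pin down the conventions, and then argue in general by tracking, for each position $k$, which factor's flag increases.
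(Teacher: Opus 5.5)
Your proposal is correct and is essentially the paper's own proof: expand both apartment cycles, apply $\mathrm{EZ}$, identify each term $\theta_{m,n}(s_b x_\sigma, s_a y_\tau)$ with the nondegenerate simplex of the concatenated basis indexed by $(\sigma\oplus\tau)\circ\rho_{a,b}$, and sum using the bijection $\Sigma_m\times\Sigma_n\times\mathfrak{S}(m,n)\to\Sigma_{m+n}$. The convention check you flag works out as you expect: $s_b$ inserts constant steps exactly at the positions $b_j$, so the $V$-flag grows by $v_{\sigma(j)}$ at step $a_j$, and the resulting permutation is exactly $(\sigma\oplus\tau)\circ\rho_{a,b}$, not its inverse.
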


\begin{proof}
Set $u_i = \phi(v_i)$ for $1 \leq i \leq m$ and $u_{m+j} = \psi(w_j)$ for $1 \leq j \leq n$, a basis of $\Q^{m+n}$. Write $y_\sigma$, $\sigma \in \Sigma_m$, for the simplices of $\Sigma\ST_m$ underlying $\llb v_1, \dots, v_m \rrb$ as in \eqref{eq:apartment}, and $z_\tau$, $\tau \in \Sigma_n$, for those of $\Sigma\ST_n$ underlying $\llb w_1, \dots, w_n \rrb$. By the Eilenberg--Zilber formula of Section~\ref{sec:chains},
\[ (\theta_{m,n})_*\,\mathrm{EZ}\bigl(y_\sigma \otimes z_\tau\bigr) \;=\; \sum_{(a,b)} \sgn(a,b)\;\theta_{m,n}\bigl(s_b\, y_\sigma,\; s_a\, z_\tau\bigr), \]
the sum over $(m,n)$-shuffles. In the $(a,b)$-term, the flag $s_b\, y_\sigma$ grows by $\phi(v_{\sigma(j)})$ at step $a_j$ and is otherwise constant, while $s_a\, z_\tau$ grows by $\psi(w_{\tau(j)})$ at step $b_j$; their levelwise sum is the flag of initial spans of $u_{\rho(1)}, \dots, u_{\rho(m+n)}$, where $\rho = (\sigma\oplus\tau)\circ\rho_{a,b}$ in the notation of Section~\ref{sec:chains}. This flag is proper and nondegenerate, its spans strictly increasing because the $u_i$ form a basis, so it is the simplex $x_\rho$ of $\Sigma\ST_{m+n}$. By the bijection of Section~\ref{sec:chains}, summing $\sgn(\sigma)\sgn(\tau)\sgn(a,b) = \sgn(\rho)$ over all triples gives
\[ (\theta_{m,n})_*\,\mathrm{EZ}\bigl(\llb v_1,\dots,v_m \rrb \otimes \llb w_1,\dots,w_n \rrb\bigr) \;=\; \sum_{\rho \in \Sigma_{m+n}} \sgn(\rho)\, x_\rho \;=\; \llb u_1, \dots, u_{m+n} \rrb. \qedhere \]
\end{proof}


\subsection{A coproduct on Steinberg modules}\label{sec:coproduct}

Recall from Section~\ref{sec:tools} that $\Sigma\ST(V)$ is pointed, with basepoint the collapsed simplicial subset of flags having $V_0 \neq 0$ or $V_k \neq V$. The basepoint of its edgewise subdivision is likewise the collapsed simplicial subset of flags $(V_0 \subseteq \cdots \subseteq V_{2p+1})$ with $V_0 \neq 0$ or $V_{2p+1} \neq V$. 

Fix $n \geq 1$ and a subspace $U \subseteq \Q^n$. Define a function $r_U$ from the $p$-simplices of $\es(\Sigma\ST_n)$ to those of $\Sigma\ST(U) \wedge \Sigma\ST(\Q^n/U)$ by sending  $x = (0 \subseteq V_0 \subseteq \cdots \subseteq V_{2p+1} \subseteq \Q^n)$ to 
\begin{equation} \label{eq:rU}
r_U(x) := \bigl(V_0 \subseteq \cdots \subseteq V_p\bigr) \wedge \bigl(V_{p+1}/U \subseteq \cdots \subseteq V_{2p+1}/U\bigr) \quad\text{if } V_p \subseteq U \subseteq V_{p+1},
\end{equation}
and $r_U(x) = \ast$ otherwise.

\begin{lemma}\label{lem:splitting}
The map $r_U$ is a map of pointed simplicial sets, and $r_U(x) \neq \ast$ only if $V_p = U = V_{p+1}$.
\end{lemma}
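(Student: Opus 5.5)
The plan is to prove the second assertion first, since it turns the definition of $r_U$ into a clean criterion, and then use that criterion to check compatibility with the simplicial structure.

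\emph{The non-basepoint criterion.} A $p$-simplex $(W_0\subseteq\cdots\subseteq W_p)$ of $\Sigma\ST(U)$ is not the basepoint exactly when $W_0=0$ and $W_p=U$. Likewise, a simplex of $\Sigma\ST(\Q^n/U)$ is not the basepoint exactly when its first term is $0$ and its last term is $\Q^n/U$. The smash product $\Sigma\ST(U)\wedge\Sigma\ST(\Q^n/U)$ identifies a pair with the basepoint as soon as one factor is the basepoint. So, given $V_p\subseteq U\subseteq V_{p+1}$, we have $r_U(x)\neq\ast$ if and only if the following all hold:
\begin{itemize}
\item $V_0=0$;
\item $V_p=U$;
\item $V_{p+1}/U=0$, that is, $V_{p+1}=U$;
\item $V_{2p+1}=\Q^n$.
\end{itemize}
This gives the second claim. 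We record the restatement
\[
r_U(x)\neq\ast \iff V_0=0,\quad V_p=U=V_{p+1},\quad V_{2p+1}=\Q^n .
\]
In particular, if $x$ is the basepoint of $\es(\Sigma\ST_n)$, so that $V_0\neq0$ or $V_{2p+1}\neq\Q^n$, then $r_U(x)=\ast$. Hence $r_U$ is pointed and descends to the quotient.

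\emph{Faces.} In $\es(X)$ the face $d_i$ on $p$-simplices is $d_i d_{2p+1-i}$ on $X_{2p+1}$, and the degeneracy $s_i$ is $s_i s_{2p+1-i}$. On the smash product, faces and degeneracies act diagonally. We must show $r_U(d_i x)=d_i r_U(x)$, and we split into two cases.

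\begin{itemize}
\item \emph{Case $r_U(x)\neq\ast$.} Then $V_p=U=V_{p+1}$.
  \begin{itemize}
  \item If $i<p$, the middle pair of $d_i x$ is still $(V_p,V_{p+1})$.
  \item If $i=p$, the middle pair becomes $(V_{p-1},V_{p+2})$, and the inclusions $V_{p-1}\subseteq U\subseteq V_{p+2}$ still hold.
  \end{itemize}
  In both subcases, deleting $V_i$ from the front half and $V_{2p+1-i}$ from the back half is exactly $d_i\wedge d_i$ applied to $r_U(x)$.
\item \emph{Case $r_U(x)=\ast$.} We must show $r_U(d_i x)=\ast$. By the criterion, it suffices to see that $d_i x$ cannot satisfy $V'_0=0$, $V'_{p-1}=U=V'_p$, $V'_{2p-1}=\Q^n$, where $V'_\bullet$ denotes the flag of $d_ix$.
  \begin{itemize}
  \item If $0<i<p$, the endpoint terms and the middle pair of $d_ix$ are those of $x$, so this is immediate.
  \item If $i=0$, the first term of $d_0x$ is $V_1\supseteq V_0$; the conditions on $d_0x$ would force $V_0=0$, $V_p=U=V_{p+1}$ and $V_{2p+1}=\Q^n$ for $x$, contradicting $r_U(x)=\ast$.
  \item The only delicate case is $i=p$. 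Suppose $V_{p-1}=U=V_{p+2}$. Then the chain $V_{p-1}\subseteq V_p\subseteq V_{p+1}\subseteq V_{p+2}$ forces $V_p=V_{p+1}=U$. The endpoint conditions are unchanged by $d_p$, so $x$ itself would satisfy the criterion, a contradiction.
  \end{itemize}
\end{itemize}
This squeezing argument for $i=p$ is the step I expect to need care. It is precisely why one proves the rigidity statement $V_p=U=V_{p+1}$ first: it sidesteps tracking the weaker condition $V_p\subseteq U\subseteq V_{p+1}$ through faces, which is not itself preserved.

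\emph{Degeneracies.} These are easier. The degeneracy $s_i s_{2p+1-i}$ repeats $V_i$ and $V_{2p+1-i}$, so the new middle pair is either $(V_p,V_{p+1})$ or $(V_p,V_p)$ together with $(V_{p+1},V_{p+1})$ placed symmetrically. The endpoint and middle conditions of the criterion are then preserved in both directions. When $r_U(x)\neq\ast$, the image is visibly $(s_i\wedge s_i)r_U(x)$; otherwise both sides are $\ast$. This completes the verification that $r_U$ is a map of pointed simplicial sets.
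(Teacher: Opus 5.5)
\textbf{There is a concrete error: you use the wrong face and degeneracy operators on $\es$, and with them the central compatibility step is false.} Your strategy is the same as the paper's. You first establish the criterion $r_U(x)\neq\ast \iff V_0=0,\ V_p=U=V_{p+1},\ V_{2p+1}=\Q^n$, and then check the faces by squeezing.

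The paper defines $\es(X)$ via the ordered concatenation $[p]\sqcup[p]\cong[2p+1]$. So $d_i$ on $\es(X)_p$ deletes $V_i$ and $V_{p+1+i}$, and $s_i$ repeats $V_i$ and $V_{p+1+i}$. Since $V_{p+1+i}$ is the $i$th term of the back flag $(V_{p+1},\dots,V_{2p+1})$, this is exactly why $r_U$ intertwines $d_i$ with $d_i\wedge d_i$.

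Your formula $d_i d_{2p+1-i}$ is the face map of the mirrored subdivision built from $[p]\sqcup[p]^{\op}$. Under it, deleting $V_{2p+1-i}$ removes the $(p-i)$th term of the back flag. Hence $r_U(d_ix)=(d_i\wedge d_{p-i})\,r_U(x)$, and your assertion that this equals $(d_i\wedge d_i)\,r_U(x)$ fails. For example, take $p=3$ and $x=(0,A_1,A_2,U,U,B_1,B_2,\Q^n)$ with all inclusions $0\subset A_1\subset A_2\subset U\subset B_1\subset B_2\subset\Q^n$ strict. Under your convention, the back flag of $r_U(d_1x)$ is $(0\subseteq B_1/U\subseteq \Q^n/U)$. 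But $d_1$ of the back flag of $r_U(x)$ is $(0\subseteq B_2/U\subseteq\Q^n/U)$. Both sides are non-basepoint simplices, so they genuinely differ.

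\textbf{How the argument goes with the correct operators.} The inner faces $0<i<p$ and all degeneracies fix both endpoints and the middle pair. The outer faces each change two things:
\begin{itemize}
\item $d_0$ replaces $V_0$ by $V_1$ \emph{and} the middle pair by $(V_p,V_{p+2})$;
\item $d_p$ replaces the middle pair by $(V_{p-1},V_{p+1})$ \emph{and} $V_{2p+1}$ by $V_{2p}$.
\end{itemize}
So your claim that the middle pair survives $d_i$ for every $i<p$ fails at $i=0$. This is harmless in the non-basepoint case, since $V_p\subseteq U\subseteq V_{p+2}$ still holds there.

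In the basepoint case the squeeze is needed at both outer faces, not only at $i=p$:
\begin{itemize}
\item If $r_U(d_0x)\neq\ast$, then $V_1=0$, $V_p=U=V_{p+2}$ and $V_{2p+1}=\Q^n$. These force $V_0=0$ and $V_{p+1}=U$.
\item If $r_U(d_px)\neq\ast$, then $V_0=0$, $V_{p-1}=U=V_{p+1}$ and $V_{2p}=\Q^n$. These force $V_p=U$ and $V_{2p+1}=\Q^n$.
\end{itemize}
In either case $r_U(x)\neq\ast$, as required. With this correction your argument is exactly the paper's proof.
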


\begin{proof}
Suppose $r_U(x) \neq \ast$. Then the front flag $(V_0 \subseteq \cdots \subseteq V_p)$ is not collapsed in $\Sigma\ST(U)$, so $V_p = U$, and the back flag $(V_{p+1}/U \subseteq \cdots \subseteq V_{2p+1}/U)$ is not collapsed in $\Sigma\ST(\Q^n/U)$, so $V_{p+1} = U$. This proves the second claim and shows that if $r_U$ is simplicial then it is pointed. 

We now show $r_U$ is simplicial. Every degeneracy $s_i$ leaves the middle pair $(V_p,V_{p+1})$ unchanged and therefore commutes with $r_U$ factorwise. There are no faces to check in degree $p=0$, so assume $p>0$. Every face $d_i$ with $1 \leq i \leq p-1$ also leaves the middle pair untouched. The faces $d_0$ and $d_p$ delete one of $V_p,V_{p+1}$; hence, the condition in \eqref{eq:rU} says that $r_U(d_0x)$ and $r_U(d_px)$ are not necessarily sent to $\ast$ when $V_p \subseteq U \subseteq V_{p+2}$ and when $V_{p-1} \subseteq U \subseteq V_{p+1}$, respectively. Instead, on simplices meeting these conditions, both $r_U \circ d_i$ and $d_i \circ r_U$ are given by the same factorwise formula. It remains to check that $d_0 x$ and $d_p x$ map to the basepoint whenever $x$ does. If $r_U(d_0 x) \neq \ast$, then $V_1 = 0$ and the second claim applied to $d_0 x$ says that $V_p = U = V_{p+2}$. It follows that $V_0 = 0$ and $V_{p+1} = U$, so $r_U(x) \neq \ast$.  The case of $d_p x$ is similar, and the lemma follows.
\end{proof}

By Lemma~\ref{lem:splitting}, if $r_U(x) \neq \ast$ then $U = V_p$, so at most one of the maps $r_U$ carries a given simplex away from the basepoint. They therefore assemble to a map of pointed simplicial sets
\[ r\colon\; \es(\Sigma\ST_n) \lra \bigvee_{U \subseteq \Q^n} \big( \Sigma\ST(U) \wedge \Sigma\ST(\Q^n/U)\big). \]
It is equivariant for $\GL_n(\Q)$ acting on the target by permuting the wedge summands.

Let $\varphi_0\colon \es(X) \to X$ be the natural map induced by the inclusion of $[p]$ onto the first $p+1$ elements of $[p] \sqcup [p]$. By \cite[Proposition~3.8]{brown2024hopf}, the composition of $|\varphi_0|$ with the inverse of the homeomorphism $|\es(X)| \cong |X|$ is homotopic to the identity and hence induces an isomorphism on homology. We  
define $\delta$ as the composite
\[ 
\delta\colon\; \St_n \;\xrightarrow{\;r_* \circ (\varphi_0)_*^{-1}\;}\; \bigoplus_{U \subseteq \Q^n} \wtl{H}_n\bigl(\Sigma\ST(U) \wedge \Sigma\ST(\Q^n/U)\bigr) \;\xrightarrow{\;\mathrm{AW}_*\;}\; \bigoplus_{U \subseteq \Q^n} \St(U) \otimes \St(\Q^n/U). 
\]
Note that  $\delta$ is equivariant for $\GL_n(\Q)$ acting on the target by permuting the summands. 

\begin{prop}\label{prop:coproduct}
For an ordered basis $v_1, \dots, v_n$ of $\Q^n$,
\[ \delta\llb v_1, \dots, v_n \rrb \;=\; \sum_{S \subseteq \{1, \dots, n\}} \sgn(\sigma_S)\; \llb v_{i_1}, \dots, v_{i_p} \rrb \otimes \llb \bar v_{j_1}, \dots, \bar v_{j_{n-p}} \rrb. \]
\end{prop}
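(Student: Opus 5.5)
The plan is to compute $\delta$ on the chain level: write down an explicit $n$-cycle in $\wtl{C}_*(\es(\Sigma\ST_n))$ that $(\varphi_0)_*$ sends to $\llb v_1,\dots,v_n\rrb$, push it forward along $r$, and then apply $\mathrm{AW}$.

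\emph{Step 1: a lifted cycle.} Write $\Delta^n$ for the standard simplex, so that $\es(\Delta^n)_p$ is the set of order-preserving maps $f\colon[p]\sqcup[p]\to[n]$. Each nondegenerate simplex $x_\sigma$ of \eqref{eq:apartment} is the image of the nondegenerate $n$-simplex of $\Delta^n$ under a map $\Delta^n\to\Sigma\ST_n$. I will take as a lift the signed sum over all $\sigma$ of the images of a fundamental chain $\omega_n$ of $\es(\Delta^n)$, taken relative to its boundary.
\begin{itemize}
\item I would define $\omega_n$ as the signed sum of the nondegenerate $n$-simplices $f$ with $f(0)=0$ and $f(2n+1)=n$. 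The signs are chosen so that the interior faces cancel. This is the chain corresponding to the standard triangulation of the homeomorphism $|\es(\Delta^n)|\cong|\Delta^n|$.
\item By naturality of $\varphi_0$ and of that homeomorphism, together with \cite[Proposition~3.8]{brown2024hopf}, $(\varphi_0)_*[\omega_n]=[\iota_n]$ in the homology of $\Delta^n$ relative to its boundary, where $\iota_n$ is the top simplex. Therefore the sum $\sum_\sigma \sgn(\sigma)\,x_{\sigma*}\omega_n$ is a cycle in $\wtl{C}_n(\es(\Sigma\ST_n))$.
\item Its boundary cancels for two reasons. Faces inside a single copy of $\Delta^n$ cancel by the interior cancellation in $\omega_n$. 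Faces on the boundary of $\Delta^n$ either land in the basepoint or cancel in pairs between $\sigma$ and $\sigma\circ(i\ i{+}1)$, exactly as for the apartment itself.
\item Applying $\varphi_0$ to this cycle returns the apartment chain.
\end{itemize}
I expect this step to be the main technical obstacle: choosing $\omega_n$ and fixing its signs correctly. If a clean closed formula for the signs is awkward, I would instead characterize $\omega_n$ inductively, using the cone structure in the last vertex.

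\emph{Step 2: apply $r$.} By Lemma~\ref{lem:splitting}, a simplex $f$ contributes only if the flag $x_\sigma\circ f$ satisfies $V_0=0$, $V_{2n+1}=\Q^n$ and $V_n=V_{n+1}=U$.
\begin{itemize}
\item This forces $U=\lin{v_{\sigma(1)},\dots,v_{\sigma(k)}}$ where $k=f(n)=f(n+1)$.
\item The front half $f|_{[0,n]}$ runs from $0$ to $k$, and the back half $f|_{[n+1,2n+1]}$ runs from $k$ to $n$.
\end{itemize}

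\emph{Step 3: apply $\mathrm{AW}$.} The only term of $\mathrm{AW}$ of bidegree $(k,n-k)$ that can be nondegenerate in both factors is $\operatorname{fr}_k\otimes\operatorname{bk}_{n-k}$. Requiring both faces to be nondegenerate picks out exactly one $f$ for each $k$: the front half increases by one at each of its first $k$ steps and is then constant, and the back half is constant for $k$ steps and then increases by one at each step. For each $\sigma$ and $k$, the surviving term is therefore
\[
\pm\,\bigl(0\subset\lin{v_{\sigma(1)}}\subset\cdots\subset U\bigr)\otimes\bigl(0\subset\lin{\bar v_{\sigma(k+1)}}\subset\cdots\subset\Q^n/U\bigr).
\]
The sign combines the coefficient of $f$ in $\omega_n$ with the sign conventions of $\mathrm{AW}$. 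I expect it to depend only on $k$ and $n$, and if it is nontrivial it should be absorbed by the orientation conventions.

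\emph{Step 4: regroup.} Use the bijection $\Sigma_k\times\Sigma_{n-k}\times\mathfrak S(k,n-k)\to\Sigma_n$ of Section~\ref{sec:chains}, writing $\sigma=(\alpha\oplus\beta)\circ\rho_{a,b}$. Fix $k$ and the shuffle $(a,b)$; set $S=\{a_1,\dots,a_k\}$, so that $U=\lin{v_{a_1},\dots,v_{a_k}}=\lin{v_i : i\in S}$ and $(a,b)$ determines $\sigma_S$ with $\sgn(a,b)=\sgn(\sigma_S)$.
\begin{itemize}
\item With $(a,b)$ fixed, summing $\sgn(\alpha)\sgn(\beta)$ over $\alpha$ and $\beta$ produces $\llb v_{i_1},\dots,v_{i_p}\rrb\otimes\llb\bar v_{j_1},\dots,\bar v_{j_{n-p}}\rrb$.
\item This term comes with the factor $\sgn(a,b)=\sgn(\sigma_S)$.
\end{itemize}
Summing over all $k$ and all shuffles gives the stated formula. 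The remaining care is to verify that the leftover sign from Step 3 is $+1$ under the Koszul and $\mathrm{AW}$ conventions. As a check, the case $n=1$ should give $\delta\llb v\rrb=\llb v\rrb\otimes 1+1\otimes\llb\bar v\rrb$.
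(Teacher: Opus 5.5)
Your strategy is the paper's: lift the apartment to a cycle in $\wtl C_n(\es(\Sigma\ST_n))$ built from the edgewise subdivision of the top simplex, push it forward along $r$ and $\mathrm{AW}$, and regroup. Your identification of the surviving simplices in Steps 2--3 is correct. Your cycle argument in Step 1 also works for any relative fundamental cycle $\omega_n$.

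The gap is that the proposition is a sign statement, and you never determine the sign it depends on. That sign is the coefficient in $\omega_n$ of the simplex surviving $\mathrm{AW}\circ r$ in bidegree $(k,n-k)$, compared with the coefficient of the simplex surviving $\varphi_0$. Your fallback, that a nontrivial sign ``should be absorbed by the orientation conventions,'' is not available. The apartment classes, $\varphi_0$, $r$ and $\mathrm{AW}$ are all fixed. The only freedom, the overall sign of $\omega_n$, is used up by requiring $\varphi_0(\omega_n)=\iota_n$. A leftover sign $(-1)^{e(k,n)}\neq 1$ would simply make the stated formula false.

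What is missing is an explicit description of $\omega_n$. Its nondegenerate $n$-simplices are exactly the $f$ with $f(n)=f(n+1)$ and exactly one unit step in each pair of positions $(i-1,i)$, $(n+i,n+i+1)$. So they are indexed by $\varepsilon\in\{0,1\}^n$, and the coefficient $(-1)^{\inv(\varepsilon)}$ works.

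The delicate part of checking this is the outer faces $d_0$ and $d_n$ of $\es$. When they do not land in the basepoint, they are \emph{interior} faces of the subdivision: $d_0$ of the simplex $(1,\eta)$ equals $d_n$ of $(\eta,0)$. These cancel against the sign $(-1)^n$ precisely because $\inv(1\eta)-\inv(\eta 0)\equiv n-1 \pmod 2$. The faces $d_i$ with $0<i<n$ cancel in one of two ways. When $\varepsilon_i\neq\varepsilon_{i+1}$, they cancel by swapping those two entries of $\varepsilon$. When $\varepsilon_i=\varepsilon_{i+1}$, they lie on $\partial\Delta^n$ and cancel across $\sigma$, as you say.

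With this formula, $\varphi_0$ sees only $\varepsilon=0^n$ and $\mathrm{AW}\circ r$ sees only $\varepsilon=0^k1^{n-k}$. Both have $\inv=0$, so the leftover sign is $+1$. If you prefer not to use a closed formula, you must at least propagate the sign through this face-adjacency for each $k$.

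Separately, your factorization in Step 4 is inverted. With $\sigma=(\alpha\oplus\beta)\circ\rho_{a,b}$ you get $\sigma(\{a_1,\dots,a_k\})=\{1,\dots,k\}$. But the surviving term has $U=\lin{v_{\sigma(1)},\dots,v_{\sigma(k)}}$. You need $\sigma=\rho_{a,b}^{-1}\circ(\alpha\oplus\beta)=\sigma_S\circ(\alpha\oplus\beta)$ instead, which has the same sign.
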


\noindent Here, the elements of $S = \{i_1, \dots, i_p\}$ and its complement are ordered so that $i_1 < \cdots < i_p$ and $j_1 < \cdots < j_{n-p}$, and $\sigma_S := (i_1, \ldots, i_p, j_1, \ldots, j_{n-p})$ is the corresponding shuffle permutation; the term of \eqref{eq:coproduct} indexed by $S$ lives in the summand indexed by $U = \langle v_{i_1}, \ldots, v_{i_p} \rangle$, and $\bar v$ denotes the image of $v$ in $\Q^n/U$. Note that the improper subsets $S = \varnothing$ and $S = \{1, \dots, n\}$ contribute $1 \otimes \llb v_1, \dots, v_n \rrb$ and $\llb v_1, \dots, v_n \rrb \otimes 1$ in the summands $U = 0$ and $U = \Q^n$, under the convention $\St(0) = \Q$.

\begin{proof}
Let $A \subseteq \Sigma\ST_n$ be the simplicial subset generated by the simplices $x_\rho$ of \eqref{eq:apartment}, $\rho \in \Sigma_n$. Its non-basepoint simplices are the flags of spans of nested subsets of $\{v_1, \dots, v_n\}$. We compute $\delta\llb v_1, \dots, v_n \rrb$ by exhibiting a cycle representing $(\varphi_0)_*^{-1}\llb v_1, \dots, v_n \rrb$ in $\wtl{C}_n(\es(A))$ and then pushing it forward under $\mathrm{AW} \circ r$.

We first parametrize the top-dimensional nondegenerate simplices of $\es(A)$. A nondegenerate $q$-simplex is a flag $(0 = V_0 \subseteq \cdots \subseteq V_{2q+1} = \Q^n)$ in which, among the paired inclusions $V_{i-1} \subseteq V_i$ and $V_{q+i} \subseteq V_{q+i+1}$, at least one of each pair is strict. The top-dimensional nondegenerate simplices, for $q = n$, thus have exactly one strict inclusion per pair and middle equality $V_n = V_{n+1}$, so they are indexed as
\[ y_{\rho, \varepsilon}, \qquad \rho \in \Sigma_n, \quad \varepsilon \in \{0,1\}^n, \]
where $\varepsilon_i$ records which inclusion of the $i$th pair is strict (so that $\varepsilon_i = 0$ means that $V_{i-1} \subset V_i$ is strict), and $\rho$ is the order in which the basis vectors enter, reading upward.

We now describe a cycle representing the apartment class $\llb v_1, \ldots, v_n \rrb$ in  $\wtl{C}_n\bigl(\es(A)\bigr)$. For $\varepsilon \in \{0,1\}^n$, let $\inv(\varepsilon) = \#\{i < j : \varepsilon_i = 1, \varepsilon_j = 0\}$, and set
\[ \wtl{z} \;=\; \sum_{\rho, \varepsilon} \sgn(\rho)\,(-1)^{\inv(\varepsilon)}\; y_{\rho, \varepsilon} \;\in\; \wtl{C}_n\bigl(\es(A)\bigr). \]
We check $\bd \wtl{z} = (\sum_i (-1)^i d_i)\wtl{z} = 0$, where $d_i$ deletes $V_i$ and $V_{n+1+i}$. The interior faces $d_i$, $1 \leq i \leq n-1$, cancel in pairs via a sign-reversing transposition in $\rho$ when $\varepsilon_i = \varepsilon_{i+1}$, and via the swap changing $\inv(\varepsilon)$ by one when $\{\varepsilon_i, \varepsilon_{i+1}\} = \{0,1\}$. The boundary faces $d_0$ and $d_n$, nonzero only when $\varepsilon_1 = 1$ and $\varepsilon_n = 0$ respectively, instead pair across the chain: the strict inclusion deleted at one end reappears as the middle one, so $d_0 y_{\rho,(1,\eta)} = d_n y_{\rho,(\eta,0)}$ for $\eta \in \{0,1\}^{n-1}$. These faces enter $\bd\wtl z$ with signs $(-1)^0$ and $(-1)^n$, so the two contributions cancel precisely when $\inv(1\eta)-\inv(\eta0)-n$ is odd; and $\inv(1\eta) - \inv(\eta)$ and $\inv(\eta 0) - \inv(\eta)$ are the numbers of zeros and ones of $\eta$, summing to $n-1$, whence $\inv(1\eta)-\inv(\eta0)\equiv n-1$ and the exponent is odd. Hence $\bd \wtl{z} = 0$. 
Moreover $\varphi_0$ sends $y_{\rho,\varepsilon}$ to its front half $(V_0 \subseteq \cdots \subseteq V_n)$, nondegenerate only for $\varepsilon = 0^n$, where it is $x_\rho$; as $\inv(0^n) = 0$,
\[ \varphi_0(\wtl{z}) \;=\; \sum_\rho \sgn(\rho)\, x_\rho \;=\; \llb v_1, \dots, v_n \rrb, \]
so $[\wtl{z}\,] = (\varphi_0)_*^{-1}\llb v_1, \dots, v_n \rrb$.

Finally, we compute the chain-level push-forward of $\wtl{z}$ under $\mathrm{AW} \circ r$. Each $y_{\rho,\varepsilon}$ has $V_n = V_{n+1}$, so $r(y_{\rho,\varepsilon})$ lies in the summand $U = V_n$, and the $p$th Alexander--Whitney term of Section~\ref{sec:chains} is
\[ \bigl(V_0 \subseteq \cdots \subseteq V_p\bigr) \otimes \bigl(V_{n+1+p}/U \subseteq \cdots \subseteq V_{2n+1}/U\bigr). \]
The first factor survives in $\Sigma\ST(U)$ if and only if its inclusions are strict with $V_p = U$, and the second survives in $\Sigma\ST(\Q^n/U)$ if and only if its inclusions are strict with $V_{n+1+p} = U$. Together these say $\varepsilon = 0^p 1^{n-p}$, leaving a single term. Fix $S=\{i_1<\cdots<i_p\}$, with complement $\{j_1<\cdots<j_{n-p}\}$, and write $\rho=\sigma_S \circ (\sigma'\oplus\tau')$, where $\sigma'\in\Sigma_p$ and $\tau'\in\Sigma_{n-p}$. The surviving term is $x_{\sigma'}\otimes x_{\tau'}$ with coefficient $\sgn(\rho)$, since $\inv(0^p1^{n-p})=0$, and
\[
\sgn(\rho)=\sgn(\sigma_S)\sgn(\sigma')\sgn(\tau').
\]
Summing over $\sigma'$ and $\tau'$ gives the tensor product of the two apartment cycles with coefficient $\sgn(\sigma_S)$; summing over $S$ gives the stated formula.
\end{proof}

\subsection{The Steinberg product and coproduct on \texorpdfstring{$\mc{H}$}{H}}\label{sec:induced}

Set $\Gamma_n=\GL_n(\Z)$. We now define the product and coproduct
\[
\mc H=\bigoplus_{n,k\geq 0}H_k(\Gamma_n;\St_n)
\]
to which the Quillen and sharbly structures will be compared.

In this section, all tensor products are over $\Q$. If $R_\bl\to \St_m$ and $S_\bl\to \St_n$ are flat resolutions over
$\Q[\Gamma_m]$ and $\Q[\Gamma_n]$, then $R_\bl\otimes S_\bl\to \St_m\otimes\St_n$ is a flat
$\Q[\Gamma_m\times\Gamma_n]$-resolution, and the usual K\"unneth map gives the ordinary cross product
\[
\times \colon H_i(\Gamma_m;\St_m)\otimes H_j(\Gamma_n;\St_n)
\longrightarrow H_{i+j}(\Gamma_m\times\Gamma_n;\St_m\otimes\St_n).
\]
Here, we must emphasize that we follow the Koszul sign convention, and the ordinary cross product treats $H_i(\Gamma_m; \St_m)$ as a graded object in degree $i$.  

However, in $\mc{H}$, the summand $H_i(\Gamma_m;\St_m)$ has total degree $m+i$. Thus, following the Koszul sign convention and accounting for the grading by total degree, we define  
\[
x\times_{\mathrm{tot}}y :=(-1)^{mj}(x\times y).
\]
The corresponding K\"unneth isomorphism is denoted $\kappa_{\mathrm{tot}}$.

\begin{lemma}\label{lem:kunneth-sign}
Let $T'_\bl$ and $T''_\bl$ be bounded-below complexes of flat modules over $\Q[G_1]$ and $\Q[G_2]$, with homology concentrated in degrees $a$ and $b$, equal there to $M'$ and $M''$. Then $T'_\bl \otimes T''_\bl$ is a bounded-below complex of flat $\Q[G_1\times G_2]$-modules with homology $M'\otimes M''$ concentrated in degree $a+b$, and $(T'_\bl\otimes T''_\bl)_{G_1\times G_2}=(T'_\bl)_{G_1}\otimes(T''_\bl)_{G_2}$. On the summand
\[
H_{a+i}\bigl((T'_\bl)_{G_1}\bigr)\otimes H_{b+j}\bigl((T''_\bl)_{G_2}\bigr)\;\cong\;H_i(G_1;M')\otimes H_j(G_2;M''),
\]
the K\"unneth isomorphism into $H_{a+b+i+j}\bigl((T'_\bl\otimes T''_\bl)_{G_1\times G_2}\bigr)\cong H_{i+j}(G_1\times G_2;M'\otimes M'')$ equals $(-1)^{aj}$ times the group-homology cross product.
\end{lemma}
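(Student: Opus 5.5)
The plan is in three steps: check the formal parts (tensor product of complexes, flatness, coinvariants) directly, and reduce the sign to a comparison between two Künneth maps.

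\emph{Formal parts.} Since $\Q$ is a field, $\Q[G_1]\otimes\Q[G_2]\cong\Q[G_1\times G_2]$. A tensor product of flat modules over these two algebras is flat over $\Q[G_1\times G_2]$; by Lazard it suffices to note that a tensor product of free modules is free. Bounded-belowness of $T'_\bl\otimes T''_\bl$ is clear. The algebraic Künneth theorem over $\Q$ gives $H_*(T'_\bl\otimes T''_\bl)\cong H_*(T'_\bl)\otimes H_*(T''_\bl)$, which is $M'\otimes M''$ concentrated in degree $a+b$. The identity $(T'_\bl\otimes T''_\bl)_{G_1\times G_2}=(T'_\bl)_{G_1}\otimes(T''_\bl)_{G_2}$ follows from $\Q\otimes_{\Q[G_1\times G_2]}(X\otimes Y)\cong(\Q\otimes_{\Q[G_1]}X)\otimes(\Q\otimes_{\Q[G_2]}Y)$, applied degreewise and compatibly with the Koszul differentials.

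\emph{Reduction of the sign.} Choose flat (say bar) resolutions $P'_\bl\to M'$ and $P''_\bl\to M''$. Lemma~\ref{lem:chain-lifting} does not apply directly, because $T'$ need not be projective. Instead I would use the zigzags given by canonical truncation, or compare both complexes to the total complex of $P'_\bl\otimes_{\Q}$-resolutions of $T'$. The cleanest route is to apply Lemma~\ref{lem:comparison} to a projective replacement. Let $Q'_\bl\to T'_\bl$ be a projective resolution (Cartan--Eilenberg), which is a quasi-isomorphism of bounded-below complexes. By Lemma~\ref{lem:chain-lifting} there is an equivariant chain map $Q'_\bl\to P'_\bl[-a]$ inducing the identity on $M'$, and likewise $Q''_\bl\to P''_\bl[-b]$. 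Everything then reduces to the case $T'=P'[-a]$ and $T''=P''[-b]$, because maps of such complexes commute with the Künneth map on coinvariants (naturality of the Künneth map).

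\emph{Computing the sign.} The main computation, and the real content, is the sign. The shift $[-a]$ is the identity on underlying modules, moved up $a$ degrees. The Koszul rule therefore gives the identification
\[
P'[-a]\otimes P''[-b]\cong (P'\otimes P'')[-a-b],\qquad x\otimes y\mapsto(-1)^{a|y|}\,x\otimes y,
\]
where $|y|$ is the degree of $y$ in $P''$. This is the standard suspension isomorphism: moving the shift past $y$ costs $(-1)^{a\,|y|}$. For classes with $|y|=j$, the Künneth map for the shifted complexes is therefore $(-1)^{aj}$ times the unshifted Künneth map for $P'\otimes P''$. The unshifted map is by definition the group homology cross product.

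I expect the main obstacle to be getting this convention exactly right. In particular, the identification $H_{a+i}((T')_{G_1})\cong H_i(G_1;M')$ of Lemma~\ref{lem:comparison} must use the unsigned shift, so that the only sign comes from passing the degree-$a$ shift past the degree-$j$ element. I would check this with a small case, for example $a=1$, $T'=P'[-1]$, and $j=1$, to confirm that the sign is $(-1)^{aj}$ rather than $(-1)^{bi}$.
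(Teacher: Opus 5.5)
Your proof is correct, but it takes a different route from the paper's.

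\textbf{The paper's route.} The paper resolves the \emph{trivial} module. It takes free resolutions $F_i\to\Q$ over $\Q[G_i]$ and uses that the augmentations $(F_i\otimes T)_{G_i}\to T_{G_i}$ are quasi-isomorphisms for flat $T$. It then reads off $(-1)^{aj}$ as the Koszul sign of the interchange $(F_1\otimes T')\otimes(F_2\otimes T'')\to(F_1\otimes F_2)\otimes(T'\otimes T'')$, which moves the degree-$a$ factor $T'$ past the degree-$j$ factor $F_2$. The case $a=b=0$ is then taken to be the group-homology cross product.

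\textbf{Your route.} You resolve the \emph{coefficients} $M'$ and $M''$ instead. You reduce to $T'=P'[-a]$ and $T''=P''[-b]$ through equivariant quasi-isomorphisms, which is sound: both legs of each zigzag induce the identity on $M'$ (resp.\ $M''$, and on $M'\otimes M''$ by naturality of algebraic K\"unneth), so Lemma~\ref{lem:comparison} makes them respect the identifications. You then locate the sign in the suspension isomorphism $x\otimes y\mapsto(-1)^{a|y|}x\otimes y$. For unsigned shifts this is indeed a chain map $P'[-a]\otimes P''[-b]\to(P'\otimes P'')[-a-b]$, and it is the identity on $H_{a+b}$, because there $|y|=0$. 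Lemma~\ref{lem:comparison} therefore applies to it, and the sign comes out as $(-1)^{aj}$.

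\textbf{What each buys.} The paper's route needs no reduction step and treats arbitrary flat $T'$, $T''$ at once. However, it tacitly identifies the cross product built from resolutions of $\Q$ with the one defined in Section~\ref{sec:induced}, which uses flat resolutions of the coefficient modules. Your route lands exactly on that definition and makes explicit that the sign is fixed by the unsigned-shift convention implicit in Lemma~\ref{lem:comparison}; the cost is the zigzag.

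\textbf{A small simplification.} If you take $P'$ projective rather than a bar resolution, Lemma~\ref{lem:chain-lifting} gives a lift $P'[-a]\to T'$ of $\mathrm{id}_{M'}$ directly. The auxiliary replacement $Q'$, and the detour you considered beforehand, are then unnecessary.
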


\begin{proof}
Coinvariants over $G_1\times G_2$ split as $(T'_\bl\otimes T''_\bl)_{G_1\times G_2}=(T'_\bl)_{G_1}\otimes(T''_\bl)_{G_2}$, and over $\Q$ the displayed map is the algebraic K\"unneth isomorphism of this tensor product. Choose free resolutions $F_1\to\Q$ and $F_2\to\Q$ over $\Q[G_1]$ and $\Q[G_2]$; then $F_1\otimes F_2\to\Q$ is a free resolution over $\Q[G_1\times G_2]$. As in the proof of Lemma~\ref{lem:comparison}, the augmentations $(F_i\otimes T)_{G_i}\to T_{G_i}$ are quasi-isomorphisms by flatness and realize the identifications of Lemma~\ref{lem:comparison} on the bicomplexes $(F_i\otimes T)_{G_i}$, a class of degree $a_i+\ell$ being carried in $F_i$-degree $\ell$ and $T$-degree $a_i$. On these bicomplexes the cross product is induced by the interchange
\[
(F_1\otimes T'_\bl)\otimes(F_2\otimes T''_\bl)\xrightarrow{\ \chi\ }(F_1\otimes F_2)\otimes(T'_\bl\otimes T''_\bl)
\]
that moves the $T'_\bl$-factor past the $F_2$-factor, with the Koszul sign. On the summand of $F_1$-degree $i$, $T'_\bl$-degree $a$, $F_2$-degree $j$, and $T''_\bl$-degree $b$, this transposes a degree-$a$ factor past a degree-$j$ factor, contributing $(-1)^{aj}$. The group-homology cross product is the same interchange with coefficients in degree zero ($a=b=0$), where the sign is trivial; the two therefore differ by $(-1)^{aj}$.
\end{proof}

\subsubsection{Product}
Let $\alpha_{m,n}\colon \Gamma_m\times\Gamma_n\to \Gamma_{m+n}$ be the block diagonal inclusion. Let
\[
\mu_{m,n}\colon \St_m\otimes\St_n\longrightarrow \St_{m+n}
\]
be the product of Section~\ref{sec:product}. For
$x\in H_i(\Gamma_m;\St_m)$ and $y\in H_j(\Gamma_n;\St_n)$, define
\[
x\cdot y
=
(\alpha_{m,n},\mu_{m,n})_*(x\times_{\mathrm{tot}}y)
\in H_{i+j}(\Gamma_{m+n};\St_{m+n}).
\]
where $(\alpha_{m,n},\mu_{m,n})_*$ is the $(\alpha, f)_*$ defined in Section \ref{sec:comparison}.

\subsubsection{Coproduct}
Fix $0\leq d\leq n$. Let $U_d=\langle e_1,\ldots,e_d\rangle\subset \Q^n$. Let $P_d$ denote the corresponding parabolic subgroup  $P_d := \Stab_{\Gamma_n}(U_d)$, with $\pi_d \colon P_d \to L_d \cong \Gamma_d\times\Gamma_{n-d}$ its Levi quotient. Set 
$W_d := \St_d\otimes\St_{n-d}$, identifying $\Q^n/U_d$ with $\Q^{n-d}$ by the last coordinates. The action of $P_d$ on $W_d$ factors through the projection $\pi_d$. 

Transitivity of the $\Gamma_n$-action on rational $d$-planes gives a $\Q[\Gamma_n]$-module isomorphism
\[
\bigoplus_{d=0}^n \Ind_{P_d}^{\Gamma_n}W_d
\cong
\bigoplus_{U\subseteq \Q^n}\St(U)\otimes\St(\Q^n/U),
\qquad
g\otimes x\mapsto gx ,
\]
where $gx$ lies in the summand indexed by $gU_d$. Let
$\delta_d:\St_n\to \Ind_{P_d}^{\Gamma_n}W_d$ be the $d$th component of $\delta$ under this
identification, where $\delta$ is the map defined in Section \ref{sec:coproduct}. Set
\[
I_d:=\Ind_{P_d}^{\Gamma_n}W_d 
\quad \mbox{ and } \quad 
K_d(k):=
\bigoplus_{i+j=k}
H_i(\Gamma_d;\St_d)\otimes H_j(\Gamma_{n-d};\St_{n-d}).
\]
We also write
\[
\rho_d:=
(\pi_d,\mathrm{id})_*\circ \Sha_d^{-1}
:
H_k(\Gamma_n;I_d)\longrightarrow H_k(L_d;W_d),
\]
where $\Sha_d$ is Shapiro's isomorphism induced by
$P_d\hookrightarrow \Gamma_n$ and the coefficient map $x\mapsto 1\otimes x$.

Define $\Delta_d\colon 
H_k(\Gamma_n;\St_n)\longrightarrow K_d(k)$ to be the composite
\[
\Delta_d\colon H_k(\Gamma_n;\St_n)
\xrightarrow{\;(\mathrm{id},\delta_d)_*\;}
H_k(\Gamma_n;I_d)
\xrightarrow{\;\rho_d\;}
H_k(L_d;W_d)
\xrightarrow{\;\kappa_{\mathrm{tot}}^{-1}\;}
K_d(k).
\]
The Steinberg coproduct is then $\Delta := \sum_d \Delta_d$.

\begin{lemma}\label{lem:lift}
Let $X_\bl$ be a bounded-below complex of projective $\Q[\Gamma_n]$-modules with homology $\St_n$ concentrated in degree $N$, and let $T_\bl$ be a bounded-below complex of flat $\Q[L_d]$-modules with homology $W_d$ concentrated in degree $N$, regarded as a complex of $P_d$-modules via $\pi_d$. If $\tau\colon X_\bl \to \Ind_{P_d}^{\Gamma_n} T_\bl$ is $\Gamma_n$-equivariant and induces $\delta_d$ on $H_N$, then, under the canonical identification
\[
\bigl(\Ind_{P_d}^{\Gamma_n} T_\bl\bigr)_{\Gamma_n} \cong (T_\bl)_{L_d},
\]
the induced map $\tau_*$ on coinvariant homology is
\[
\rho_d \circ (\id,\delta_d)_* \colon H_k(\Gamma_n;\St_n) \lra H_k(L_d;W_d).
\]
In particular, $\kappa_{\mathrm{tot}}^{-1}\circ\tau_*=\Delta_d$.
\end{lemma}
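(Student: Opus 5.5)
The plan is to factor $\tau$ through an intermediate complex so that each piece is covered by Lemma~\ref{lem:comparison}, and to use Lemma~\ref{lem:chain-lifting} to reduce to a convenient choice of $\tau$.

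\emph{Step 1: independence of $\tau$.} Any two $\Gamma_n$-equivariant chain maps $X_\bl\to\Ind_{P_d}^{\Gamma_n}T_\bl$ inducing $\delta_d$ on $H_N$ are $\Gamma_n$-chain homotopic. This follows from Lemma~\ref{lem:chain-lifting} with $\alpha=\id$, since $X_\bl$ is projective and $\Ind_{P_d}^{\Gamma_n}T_\bl$ has homology $\Ind_{P_d}^{\Gamma_n}W_d=I_d$ concentrated in degree $N$; here induction is exact, as $\Q[\Gamma_n]$ is free over $\Q[P_d]$. Consequently $\tau_*$ depends only on $\delta_d$.

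\emph{Step 2: decomposing the coinvariant map.} The canonical identification $(\Ind_{P_d}^{\Gamma_n}T_\bl)_{\Gamma_n}\cong(T_\bl)_{P_d}$, given by $1\otimes(g\otimes t)\mapsto$ the class of $t$ when $g\in P_d$, is the chain-level Shapiro map. Composing with the natural surjection $(T_\bl)_{P_d}\to(T_\bl)_{L_d}$ gives the stated identification, since $P_d$ acts through $\pi_d$. I would then write $\tau_*$ as the composite of three maps on coinvariant homology:
\begin{enumerate}
\item $X_\bl\to\Ind T_\bl$, taking $\Gamma_n$-coinvariants;
\item the inverse of the map induced by the $P_d$-equivariant inclusion $j\colon T_\bl\to\Ind T_\bl$, $t\mapsto1\otimes t$, after passing to $P_d$- and $\Gamma_n$-coinvariants respectively;
\item the map induced by $(\pi_d,\id)$ from $(T_\bl)_{P_d}$ to $(T_\bl)_{L_d}$.
\end{enumerate}

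\emph{Step 3: identifying each piece.} Each piece is identified with the corresponding derived map via Lemma~\ref{lem:comparison}. The terms of $\Ind T_\bl$ are flat over $\Q[\Gamma_n]$, and the terms of $T_\bl$ are flat over $\Q[P_d]$ by restriction along $\pi_d$. Flatness over $\Q[P_d]$ is the point needing care. I would try first to avoid it entirely: replace $T_\bl$ by a projective $\Q[P_d]$-resolution mapping to it, which Lemma~\ref{lem:chain-lifting} provides. Alternatively, only the statement "bounded-below complex, computing derived coinvariants" is really needed, and one can use bar resolutions as in the proof of Lemma~\ref{lem:comparison}.

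With the hypotheses in place, piece (i) is $(\id,\delta_d)_*$. Piece (ii) is $\Sha_d^{-1}$, because $j$ induces the coefficient map $x\mapsto1\otimes x$ on $H_N$ and is equivariant for $P_d\hookrightarrow\Gamma_n$; checking this chain-level compatibility of Shapiro's map is routine. Piece (iii) is $(\pi_d,\id)_*$. Composing the three gives $\rho_d\circ(\id,\delta_d)_*$. The final statement then follows from the definition of $\Delta_d$.

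The main obstacle is verifying that the chain-level identification $(\Ind T_\bl)_{\Gamma_n}\cong(T_\bl)_{P_d}$ is inverse to the map induced by $j$, and that this agrees with the derived Shapiro isomorphism under Lemma~\ref{lem:comparison}. It also needs to be checked that no Koszul signs enter, since all the maps involved have degree zero.
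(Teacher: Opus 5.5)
Your Step~1 is sound, but Steps~2--3 rest on flatness claims that are false, and without them the three-piece decomposition collapses. For $0<d<n$ the kernel $U_d=\ker\pi_d\cong\Z^{d(n-d)}$ acts trivially on $T_\bl$, so the terms of $T_\bl$ are not flat over $\Q[P_d]$. For instance, $\Q[L_d]\cong\Q[P_d]\otimes_{\Q[U_d]}\Q$ has $\operatorname{Tor}_1^{\Q[P_d]}(\Q,\Q[L_d])\cong H_1(U_d;\Q)\neq 0$. By Shapiro, the same failure holds for $\Ind_{P_d}^{\Gamma_n}\Q[L_d]$ over $\Q[\Gamma_n]$.

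Consequently $(\Ind_{P_d}^{\Gamma_n}T_\bl)_{\Gamma_n}$ and $(T_\bl)_{P_d}$ do not compute $H_*(\Gamma_n;I_d)\cong H_*(P_d;W_d)$. Both are literally the complex $(T_\bl)_{L_d}$ (your ``natural surjection'' in (iii) is the identity, since $\pi_d$ is onto), and they compute $H_*(L_d;W_d)$. This has three consequences:
\begin{enumerate}
\item Lemma~\ref{lem:comparison} does not apply to piece (i) or piece (ii).
\item Piece (iii) is an identity map, so it cannot carry $(\pi_d,\id)_*$.
\item The entire content of the lemma is hidden in piece (i), where you have no tool.
\end{enumerate}
Your bar-resolution alternative fails for the same reason: the statement it relies on, that $(T_\bl)_{P_d}$ computes derived $P_d$-coinvariants, is false, since the augmentation from the bar complex is not a quasi-isomorphism when $T_\bl$ is not $P_d$-flat.

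The fix you mention in passing is therefore essential. It must be implemented by replacing $\tau$, not by factoring it. Take:
\begin{itemize}
\item a bounded-below complex $R_\bl$ of projective $\Q[P_d]$-modules with homology $W_d$ in degree $N$;
\item a $\pi_d$-equivariant lift $\widetilde\pi\colon R_\bl\to T_\bl$ of $\id_{W_d}$;
\item a $\Gamma_n$-equivariant lift $\Lambda\colon X_\bl\to\Ind_{P_d}^{\Gamma_n}R_\bl$ of $\delta_d$.
\end{itemize}
Both lifts are supplied by Lemma~\ref{lem:chain-lifting}, since induction is exact and preserves projectives. Your Step~1 then gives $\tau\simeq\Ind_{P_d}^{\Gamma_n}(\widetilde\pi)\circ\Lambda$. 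On coinvariants this is
\[
(X_\bl)_{\Gamma_n}\to(\Ind_{P_d}^{\Gamma_n}R_\bl)_{\Gamma_n}\cong(R_\bl)_{P_d}\to(T_\bl)_{L_d}.
\]
Now every complex is flat over the relevant group ring, so Lemma~\ref{lem:comparison} identifies the three maps with $(\id,\delta_d)_*$, $\Sha_d^{-1}$, and $(\pi_d,\id)_*$. This is exactly the paper's argument.

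Note also that once $R_\bl$ is projective, the chain-level Shapiro compatibility you flagged as the main obstacle is immediate, because $\Sha_d$ is represented by $r\mapsto 1\otimes r$. The real obstacle was the lack of $P_d$-flatness of $T_\bl$.
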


\begin{proof}
Let $R_\bl$ be a bounded-below complex of projective $\Q[P_d]$-modules with homology $W_d$ concentrated in degree $N$ --- for instance, the $N$-fold shift of a projective $\Q[P_d]$-resolution of $W_d$. Since $\Q[\Gamma_n]$ is free as a right $\Q[P_d]$-module, induction from $P_d$ to $\Gamma_n$ is exact and preserves projectives, so $\Ind_{P_d}^{\Gamma_n}R_\bl$ and $\Ind_{P_d}^{\Gamma_n}T_\bl$ have homology $\Ind_{P_d}^{\Gamma_n}W_d$ concentrated in degree $N$, the former by projective modules. By Lemma~\ref{lem:chain-lifting}, applied in degree $N$, there are equivariant lifts
\[
\Lambda\colon X_\bl \longrightarrow \Ind_{P_d}^{\Gamma_n}R_\bl,
\qquad
\widetilde\pi\colon R_\bl \longrightarrow T_\bl
\]
of $\delta_d$ and $\id_{W_d}$, respectively, with $\widetilde\pi$ $\pi_d$-equivariant.

Both $\tau$ and $\Ind_{P_d}^{\Gamma_n}(\widetilde\pi)\circ\Lambda$ are $\Gamma_n$-equivariant lifts of $\delta_d$ to $\Ind_{P_d}^{\Gamma_n}T_\bl$. By Lemma~\ref{lem:chain-lifting} they are equivariantly chain homotopic, and therefore induce the same map on coinvariant homology. On coinvariants, the latter composite is
\[
(X_\bl)_{\Gamma_n}
\xrightarrow{\;\Lambda\;}
\bigl(\Ind_{P_d}^{\Gamma_n}R_\bl\bigr)_{\Gamma_n}
\xrightarrow{\;\cong\;}
(R_\bl)_{P_d}
\xrightarrow{\;\widetilde\pi\;}
(T_\bl)_{L_d}.
\]
Lemma~\ref{lem:comparison}, in degree $N$, identifies the first and last maps on homology with $(\id,\delta_d)_*$ and $(\pi_d,\id)_*$, respectively. The middle isomorphism induces $\Sha_d^{-1}$, since $\Sha_d$ is represented by $r\mapsto 1\otimes r$. Hence $\tau$ induces
\[
(\pi_d,\id)_*\circ\Sha_d^{-1}\circ(\id,\delta_d)_*
=\rho_d\circ(\id,\delta_d)_*,
\]
as claimed.
\end{proof}

\subsubsection{Unit and counit}
The unit is the inclusion of the summand $H_0(\Gamma_0;\St_0)=\Q$, and the counit is the
projection onto it. Since $\St_0=\Q$, products with the rank-zero summand are the evident
identifications. By Proposition~\ref{prop:coproduct}, the $U=0$ and $U=\Q^n$ components of
$\delta$ are the canonical maps $\St_n\cong \St_0\otimes\St_n$ and
$\St_n\cong \St_n\otimes\St_0$. For $n=0$ we define
$\delta_0:\St_0\to \St_0\otimes\St_0$ to be the canonical isomorphism. These data define the Steinberg product and coproduct on $\mc H$. The bialgebra axioms are not verified here; they
follow a posteriori from the comparisons with the Hopf algebra structures of
Ash--Miller--Patzt and Brown--Chan--Galatius--Payne.

\begin{remark}\label{rem:duoidal}
The pair $(\mu, \delta)$ exists already on the Steinberg modules, before passing to group homology. The modules $\St$ form a representation of the groupoid $\coprod_n \GL_n(\Z)$, and the category of such representations carries two monoidal structures: Day convolution \cite{miller2020stability}, for which an algebra is a family of $\GL_m(\Z) \times \GL_n(\Z)$-equivariant maps $\St_m \otimes \St_n \to \St_{m+n}$, and parabolic induction \cite{nagpal2019vi}, for which a coalgebra is a family of $\GL_n(\Z)$-equivariant maps
\[ \St_n \lra \bigoplus_{d=0}^{n} \Ind_{P_d}^{\GL_n(\Z)} \bigl( \St_d \otimes \St_{n-d} \bigr), \]
with $P_d$ acting through its Levi quotient. The product $\mu$ is such a family, and so, under the identification above, is the coproduct $\delta$. Ash--Miller--Patzt show that the two structures are compatible and that $\St$ is a bialgebra in the resulting duoidal category \cite[Remark~1.4]{ash2024hopf}. The comparisons of Sections~\ref{sec:amp} and~\ref{sec:bcgp} then identify the Hopf algebras of \cite{ash2024hopf} and \cite{brown2024hopf} with a homological shadow of this bialgebra.
\end{remark}

\section{Comparing with the sharbly Hopf structure}\label{sec:amp}

Let $V$ be a finitely generated free $\Z$-module of rank $n$. Following \cite[Section 2]{ash2024hopf}, let $X(V)$ be the simplicial complex whose vertices are the nonzero vectors of $V$ taken up to sign, and in which every finite set of vertices is a simplex, and let $L(V)$ be the subcomplex of simplices whose vertices do not span $V \otimes \Q$. For $k \geq 0$ set
\[ \Sh_k(V) \;=\; C_{n+k-1}\bigl(X(V), L(V);\, \Q\bigr), \]
the relative simplicial chains: the $\Q$-vector space on the symbols $[v_1, \dots, v_{n+k}]$, where the $v_i$ are distinct vertices spanning $V \otimes \Q$, subject to $[v_{\sigma(1)}, \dots, v_{\sigma(n+k)}] = \sgn(\sigma)\,[v_1, \dots, v_{n+k}]$, with the simplicial boundary as differential and the augmentation $\Sh_0(V) \to \St(V \otimes \Q)$ sending $[v_1, \dots, v_n]$ to the apartment class of the basis $v_1, \dots, v_n$ of $V \otimes \Q$. We write $\Sh_\bl(V) \to \St(V \otimes \Q)$ for the augmented complex and abbreviate $\St_n = \St(\Z^n \otimes \Q)$ as before. For the rank-zero module we set $\Sh(0) = \Q$, supported in degree 0, with augmentation the identity map to $\St_0 = \Q$.

\begin{prop}\label{prop:sharbly}
The map $\Sh_\bl(\Z^n) \to \St_n$ is a resolution of $\St_n$ by projective $\Q[\GL_n(\Z)]$-modules.
\end{prop}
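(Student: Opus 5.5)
Two things have to be established: exactness of the augmented complex $\Sh_\bl(\Z^n)\to\St_n\to 0$, and projectivity of each $\Sh_k(\Z^n)$ over $\Q[\GL_n(\Z)]$. We treat them separately; the case $n=0$ is immediate from the convention $\Sh(0)=\Q$.

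\textbf{Exactness.} This is a statement about the pair $(X(V),L(V))$. The complex $X(V)$ is a full simplex on an infinite vertex set, so it is contractible. The long exact sequence of the pair then gives
\[ H_i(X(V),L(V);\Q)\cong \wtl H_{i-1}(L(V);\Q). \]
So it suffices to know that $L(V)$ has reduced rational homology concentrated in degree $n-2$, equal to $\St(V\otimes\Q)$.

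To see this, cover $L(V)$ by the full subcomplexes $X(W)$, where $W$ runs over the nonzero proper rational subspaces and $X(W)$ consists of the simplices whose vertices lie in $W$. A finite intersection of these is again of this form (or empty), and each $X(W)$ is a full simplex, hence contractible. The nerve theorem, in its poset form (Quillen's fiber lemma applied to the map sending a simplex to the span of its vertices), identifies $L(V)\simeq \T(V)$. Since $\T(V)$ is a wedge of $(n-2)$-spheres by the Solomon--Tits theorem, the relative homology $H_*(X(V),L(V))$ is concentrated in degree $n-1$ and equals $\St(V\otimes\Q)$.

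In the indexing $\Sh_k=C_{n+k-1}$, this says the complex is exact for $k>0$ and has $H_0\cong\St_n$. It remains to check that this isomorphism is the stated augmentation. A basis simplex $[v_1,\dots,v_n]$ has boundary lying in $L(V)$, and the connecting map sends it to the fundamental class of the boundary sphere. Under $L(V)\simeq\T(V)$ that class goes to the apartment cycle, up to a global sign convention that we fix so the normalizations agree. Surjectivity onto $\St_n$ is automatic, since apartment classes generate $\St_n$.

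\textbf{Projectivity.} $\Sh_k(\Z^n)$ is spanned by oriented simplices, permuted up to sign by $\GL_n(\Z)$. We decompose it as a direct sum over orbits of simplices $\{\pm v_1,\dots,\pm v_{n+k}\}$ spanning $\Q^n$. The summand for the orbit of a simplex $s$ is
\[ \Ind_{\Stab(s)}^{\GL_n(\Z)}\Q_{\chi}, \]
where $\chi$ is the orientation character; if $\chi$ is nontrivial on orientation-reversing stabilizer elements, the summand is possibly zero.

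Because the vertices span $\Q^n$, an element fixing the vertex set setwise permutes finitely many vectors up to sign, and these vectors span. So $\Stab(s)$ embeds into the finite group of signed permutations of the vertices, and is finite. Over $\Q$, Maschke's theorem makes $\Q_\chi$ a projective $\Q[\Stab(s)]$-module. Induction preserves projectives, so each summand is projective, and hence so is $\Sh_k$.

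\textbf{Main obstacle.} The expected difficulty is the homotopy equivalence $L(V)\simeq\T(V)$, together with the compatibility of the augmentation with apartment classes, including its sign. If the nerve argument proves awkward, I would instead cite \cite[Section 2]{ash2024hopf}, where this resolution is established.
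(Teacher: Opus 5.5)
Your proposal is correct. The projectivity half is the same as the paper's argument: split $\Sh_k(\Z^n)$ over orbits of unoriented spanning simplices, note that the setwise stabilizer is finite because it embeds in the signed permutations of a spanning set, and conclude that each summand $\Ind_{\Stab(s)}^{\GL_n(\Z)}\Q_\chi$ is projective by Maschke's theorem and induction.

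For exactness the paper gives no argument and simply cites \cite[Section~2]{ash2024hopf}. You instead supply the standard proof: $X(V)$ is contractible, so $H_*(X(V),L(V))\cong\wtl H_{*-1}(L(V))$; the fiber lemma for $\sigma\mapsto\operatorname{span}(\sigma)$ gives $L(V)\simeq\T(V)$; and Solomon--Tits concentrates the homology in one degree. This is sound. It makes the proposition self-contained, and it also explains why the augmentation is the apartment class: the subdivided boundary of a basis simplex maps isomorphically onto an apartment.

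You can drop your hedge about signs rather than ``fix'' it. The sign comes from a natural chain-level formula, so it depends only on $n$. Rescaling the augmentation by a nonzero constant changes neither its kernel nor its surjectivity. You should also state explicitly that $C_i(X(V),L(V))=0$ for $i<n-1$, since fewer than $n$ vectors cannot span. This is what makes $H_0(\Sh_\bl)$ equal to $H_{n-1}(X(V),L(V))$.

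One aside is wrong, though harmless. An orbit summand $\Ind_{\Stab(s)}^{\GL_n(\Z)}\Q_\chi$ is never zero: choosing one ordering for each unoriented spanning simplex gives a $\Q$-basis of $\Sh_k$. What vanishes when $\Stab(s)$ contains orientation-reversing elements is that orbit's contribution to the coinvariants $(\Sh_k)_{\GL_n(\Z)}$, not the module itself. This does not affect your projectivity conclusion.
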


\begin{proof}
For exactness, see \cite[Section 2]{ash2024hopf}. For projectivity, decompose $\Sh_k(\Z^n)$ over the $\GL_n(\Z)$-orbits of generators. The setwise stabilizer of the unoriented simplex corresponding to a generator $[v_1, \ldots, v_{n+k}]$ is finite and acts by the sign of the induced permutation on the set of pairs $\{\{ \pm v_1 \}, \ldots, \{ \pm v_{n+k} \} \}$. Each orbit summand is thus induced from a character of a finite group. Characters of finite groups are projective over $\Q$, and induction preserves projectivity, so $\Sh_k(\Z^n)$ is projective.
\end{proof}

Ash--Miller--Patzt equip the sharbly resolutions with a product and a coproduct \cite[Definitions 2.1 and 2.6--2.7]{ash2024hopf}. Let $\phi \colon \Q^m \to \Q^{m+n}$ and $\psi \colon \Q^n \to \Q^{m+n}$ be the embeddings given by the first $m$ and last $n$ coordinates, respectively, as in  Section~\ref{sec:product}. The product is the chain map 
\begin{equation*}
\begin{aligned}
\nabla\colon\; \Sh_k(\Z^m) \otimes \Sh_\ell(\Z^n) &\lra \Sh_{k+\ell}(\Z^{m+n}), \\
[v_\bl] \otimes [w_\bl] &\longmapsto [\phi(v_1), \dots, \phi(v_{m+k}),\, \psi(w_1), \dots, \psi(w_{n+\ell})].
\end{aligned}
\end{equation*}
It is equivariant with respect to the block diagonal embedding. 

For a saturated submodule $L \subseteq \Z^n$ write $S(L) := \{i : v_i \in L \otimes \Q\}$. The coproduct is 
\[
\begin{aligned}
\Delta\colon\; \Sh_k(\Z^n) &\lra \bigoplus_{L}\, \bigoplus_{s+t=k} \Sh_s(L) \otimes \Sh_t(\Z^n/L), \\
[v_1, \dots, v_{n+k}] &\longmapsto \sum_{L} \sgn(\sigma_L)\, [v_{i_1}, \dots, v_{i_p}] \otimes [\bar v_{j_1}, \dots, \bar v_{j_{n+k-p}}],
\end{aligned}
\]
where the sum runs over the saturated $L$ for which $S(L) = \{i_1 < \cdots < i_p\}$ spans $L \otimes \Q$, the complementary indices are $j_1 < \cdots < j_{n+k-p}$, bars denote images in $\Z^n/L$, and $\sigma_L$ is the shuffle $(i_1, \dots, i_p, j_1, \dots, j_{n+k-p})$. A term with a zero quotient vertex or with two quotient vertices equal up to sign is understood to be zero. Only finitely many $L$ contribute. 

Passing to coinvariants and homology, the product is obtained by applying $\kappa_{\mathrm{tot}}$, the map induced by $\nabla$, and change of groups along the block diagonal $\GL_m(\Z)\times\GL_n(\Z)\to\GL_{m+n}(\Z)$. The coproduct is obtained by applying the map induced by $\Delta$, the canonical identification
\begin{equation}\label{eq:sharbly-coinv}
\Bigl(\, \bigoplus_{L} \Sh_\bl(L) \otimes \Sh_\bl(\Z^n/L) \Bigr)_{\GL_n(\Z)} \;\cong\; \bigoplus_{d=0}^{n} \bigl(\Sh_\bl(\Z^d) \otimes \Sh_\bl(\Z^{n-d})\bigr)_{\GL_d(\Z) \times \GL_{n-d}(\Z)},
\end{equation}
and $\kappa_{\mathrm{tot}}^{-1}$ \cite[Section~2]{ash2024hopf}. The identification \eqref{eq:sharbly-coinv} is the induced-module identification of Section~\ref{sec:induced}; the unipotent radical acts trivially \cite[Lemma~2.8]{ash2024hopf}. The unit is the inclusion of the bidegree-$(0,0)$ summand, and the counit is its projection. The differential, Leibniz rules, and braiding in \cite{ash2024hopf} use total degree and therefore agree with our Koszul sign convention \cite[Lemmas~2.5, 2.10, and 2.11]{ash2024hopf}.

\begin{theorem}\label{thm:amp}
The sharbly product and coproduct on $\mc{H}$ coincide with the Steinberg product and coproduct.
\end{theorem}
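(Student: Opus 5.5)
The plan is to apply Lemma~\ref{lem:comparison} (with $N=0$) for the product and Lemma~\ref{lem:lift} (with $N=0$) for the coproduct. Both reduce the claim to a check on $H_0$: the sharbly maps $\nabla$ and $\Delta$ must be equivariant chain maps covering $\mu$ and $\delta_d$ under the augmentations. By Proposition~\ref{prop:sharbly}, $\Sh_\bl(\Z^n)$ is a projective, hence flat, resolution of $\St_n$, so both lemmas apply.

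\emph{Product.} By Lemma~\ref{lem:kunneth-sign} with $a=b=0$, the tensor product $\Sh_\bl(\Z^m)\otimes\Sh_\bl(\Z^n)$ is a flat $\Q[\Gamma_m\times\Gamma_n]$-resolution of $\St_m\otimes\St_n$. Its coinvariant K\"unneth map is the ordinary cross product. The sharbly product applies $\kappa_{\mathrm{tot}}$ and then $\nabla$, so the $(-1)^{mj}$ sign appears on both sides and cancels.

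Since $\nabla$ is $\alpha_{m,n}$-equivariant, Lemma~\ref{lem:comparison} shows that $\nabla$ induces $(\alpha_{m,n},f)_*$, where $f$ is the map induced by $\nabla$ on $H_0$. In degree $0$, $\nabla$ sends $[v_1,\dots,v_m]\otimes[w_1,\dots,w_n]$ to $[\phi(v_\bl),\psi(w_\bl)]$. The augmentation sends this to $\llb \phi(v_\bl),\psi(w_\bl)\rrb$, which equals $\mu(\llb v_\bl\rrb\otimes\llb w_\bl\rrb)$ by Proposition~\ref{prop:product}. Apartment classes generate, and the generators of $\Sh_0$ surject onto them, so $f=\mu$. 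It remains to check that $\nabla$ is a chain map. This is part of \cite{ash2024hopf}, and it is also visible directly because $\nabla$ is the join of simplices. I also have to confirm that the sign of the boundary on the second factor matches the Koszul convention in total degree, citing \cite[Lemma~2.5]{ash2024hopf}.

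\emph{Coproduct.} Fix $d$ and restrict $\Delta$ to the summands with $\rk L=d$. Using $\Sh(L)\otimes\Sh(\Z^n/L)$ and the orbit decomposition, this target is $\Ind_{P_d}^{\Gamma_n}T_\bl$ with $T_\bl=\Sh_\bl(\Z^d)\otimes\Sh_\bl(\Z^{n-d})$. Here I need that $P_d$ acts through $\pi_d$, which is where \cite[Lemma~2.8]{ash2024hopf} on the unipotent radical enters. I also need that the resulting coinvariant identification is \eqref{eq:sharbly-coinv}. Then $T_\bl$ is a flat $\Q[L_d]$-resolution of $W_d$, and $X_\bl=\Sh_\bl(\Z^n)$ is projective. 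Lemma~\ref{lem:lift} therefore gives $\kappa_{\mathrm{tot}}^{-1}\circ\Delta_*=\Delta_d$, provided $\Delta$ induces $\delta_d$ on $H_0$.

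To check this, take a degree-$0$ generator $[v_1,\dots,v_n]$, a basis of $\Q^n$. In degree $0$ only $\Sh_0\otimes\Sh_0$ survives. A saturated $L$ contributes exactly when $S(L)$ spans $L\otimes\Q$ and the complementary images $\bar v_j$ have total count $n-p=\rk(\Z^n/L)$, so they are automatically nonzero and distinct. Such $L$ correspond bijectively to subsets $S\subseteq\{1,\dots,n\}$ via $L=\langle v_i:i\in S\rangle_\Q\cap\Z^n$. The terms are $\sgn(\sigma_S)[v_{i_\bl}]\otimes[\bar v_{j_\bl}]$, and augmenting them gives exactly the formula of Proposition~\ref{prop:coproduct}. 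Hence the $H_0$ map equals $\delta$, and its rank-$d$ component is $\delta_d$.

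\emph{Remaining points and main obstacle.} The unit and counit agree trivially, since both structures use the bidegree-$(0,0)$ summand. The main obstacle is bookkeeping rather than ideas. First, $\Delta$ must be a genuine equivariant chain map into the induced complex, with the correct Koszul signs in total degree. The degenerate-term conventions (zero quotient vertices, or vertices equal up to sign) must be compatible with the differential, and the Ash--Miller--Patzt sign conventions must match $\kappa_{\mathrm{tot}}$ in Lemma~\ref{lem:lift}. Second, the identification of $H_0$ must be compatible with the ordering conventions for $\bar v$ and with the identification $\Q^n/U_d\cong\Q^{n-d}$. For the first point I would try to cite \cite[Lemmas~2.5, 2.10]{ash2024hopf} directly. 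Failing that, I would verify the chain-map property on generators by splitting the boundary faces according to whether the deleted vertex lies in $L$.
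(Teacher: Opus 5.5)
Your overall route is the paper's: Lemma~\ref{lem:comparison} for the product, Lemma~\ref{lem:lift} for the coproduct, and the degree-zero identifications from Propositions~\ref{prop:product} and~\ref{prop:coproduct}. Your $H_0$ checks, including the bijection between subsets $S$ and saturated $L$ in degree zero, are fine.

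The gap is the sign step, which is the one thing the paper's proof adds beyond those lemmas. You invoke Lemma~\ref{lem:kunneth-sign} with $a=b=0$. That means you work with $\Sh_\bl(\Z^m)\otimes\Sh_\bl(\Z^n)$ carrying the ordinary differential $\partial x\otimes y+(-1)^k x\otimes\partial y$ for $x\in\Sh_k(\Z^m)$, and its K\"unneth map is indeed the ordinary cross product. But $\nabla$ is a join, and $x$ has $m+k$ vertices, so
\[
\partial\nabla(x\otimes y)=\nabla(\partial x\otimes y)+(-1)^{m+k}\,\nabla(x\otimes\partial y).
\]
Hence $\nabla$ is not a chain map on that resolution when $m$ is odd, and Lemma~\ref{lem:comparison} cannot be applied to it there. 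Likewise, $\Delta$ is a chain map into the tensor complex with total-degree signs, not into the $T_\bl$ you hand to Lemma~\ref{lem:lift}. Checking \cite[Lemma~2.5]{ash2024hopf}, as you propose, would confirm the total-degree sign, and that is exactly what breaks your setup. The claim that ``the $(-1)^{mj}$ sign appears on both sides and cancels'' does not close this. That sign is precisely the discrepancy between the two tensor differentials. You must show that the K\"unneth map of the complex on which $\nabla$ and $\Delta$ actually are chain maps equals $\times_{\mathrm{tot}}$ (respectively, has inverse $\kappa_{\mathrm{tot}}^{-1}$).

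The paper does this with the automorphism $\Theta$, equal to $(-1)^{a\ell}$ on $\Sh_k(\Z^a)\otimes\Sh_\ell(\Z^b)$. It intertwines the ordinary and total-degree differentials and is the identity in degree zero. So $\nabla\circ\Theta$ and $\Theta\circ\Delta$ are genuine equivariant chain maps of the ordinary resolutions, with the same degree-zero behaviour as $\nabla$ and $\Delta$. The lemmas then apply verbatim, and the sign $\Theta$ introduces is exactly the one in $\times_{\mathrm{tot}}$.

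Equivalently, within your framework you could regrade $\Sh_k(\Z^m)$ to degree $m+k$, so that the Ash--Miller--Patzt tensor differential becomes the ordinary one. Then apply Lemma~\ref{lem:kunneth-sign} with $a=m$, $b=n$ (respectively $a=d$, $b=n-d$), together with Lemmas~\ref{lem:comparison} and~\ref{lem:lift} for $N=m+n$ (respectively $N=n$). This is how the paper treats the Quillen side.
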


\begin{proof}
Let $\Theta$ be the automorphism of $\bigoplus_{a,b} \Sh_k(\Z^a) \otimes \Sh_\ell(\Z^b)$ whose restriction to $\Sh_k(\Z^a) \otimes \Sh_\ell(\Z^b)$ is multiplication by $(-1)^{a\ell}$.  Composing with this automorphism converts the differential of the Ash--Miller--Patzt differential graded bialgebra, with Koszul signs governed by total degree, to that of the ordinary tensor product of resolutions.  

Note, in particular, that $\Theta$ is the identity on $\Sh_0$. Thus $\nabla \circ \Theta$ and $\Theta \circ \Delta$ are equivariant chain maps of the underlying resolutions, agreeing with $\nabla$ and $\Delta$ in degree zero. It remains to identify their degree-zero augmentations and apply the comparison lemmas of Sections~\ref{sec:comparison} and~\ref{sec:induced}.

For the product, let $\alpha_{m,n}\colon \GL_m(\Z) \times \GL_n(\Z) \to \GL_{m+n}(\Z)$ be the block diagonal embedding. By Proposition~\ref{prop:sharbly}, $\Sh_\bl(\Z^m) \otimes \Sh_\bl(\Z^n) \to \St_m \otimes \St_n$ is a projective resolution over $\Q[\GL_m(\Z) \times \GL_n(\Z)]$, and $\nabla \circ \Theta$ is an $\alpha_{m,n}$-equivariant lift of $\mu_{m,n}$, since in degree zero it sends $[v_1, \dots, v_m] \otimes [w_1, \dots, w_n]$ to $\llb \phi(v_1), \dots, \phi(v_m), \psi(w_1), \dots, \psi(w_n) \rrb$ by Proposition~\ref{prop:product}. By Lemma~\ref{lem:comparison} it induces $(\alpha_{m,n}, \mu_{m,n})_*$ on coinvariants, so the sharbly product is $(\alpha_{m,n}, \mu_{m,n})_* \circ \times_{\mathrm{tot}}$, the Steinberg product.

For the coproduct, fix $0 \leq d \leq n$ and set $T_\bl:=\Sh_\bl(\Z^d)\otimes\Sh_\bl(\Z^{n-d})$. Let $\Delta^{\langle d \rangle}\colon \Sh_\bl(\Z^n) \to \Ind_{P_d}^{\GL_n(\Z)}T_\bl$ be the rank-$d$ component of $\Theta \circ \Delta$ \cite[Section~2]{ash2024hopf}. By Proposition~\ref{prop:sharbly}, the source is a complex of projective $\Q[\GL_n(\Z)]$-modules and $T_\bl\to W_d$ is a projective, hence flat, $\Q[L_d]$-resolution; we regard $T_\bl$ as a $P_d$-complex through $\pi_d$. The map $\Delta^{\langle d \rangle}$ lifts $\delta_d$, since in degree zero it is the $|S|=d$ part of Proposition~\ref{prop:coproduct}. By Lemma~\ref{lem:lift}, $\kappa_{\mathrm{tot}}^{-1} \circ (\Delta^{\langle d \rangle})_* = \Delta_d$; summing over $d$, the sharbly coproduct is the Steinberg coproduct.
\end{proof}

\section{Comparing with the Quillen Hopf structure}\label{sec:bcgp}
We recall the part of the Waldhausen construction used below. Let $\mathrm{Proj}_\Z$ be the category of finitely generated projective $\Z$-modules. For $p\geq 0$, let $S_p(\mathrm{Proj}_\Z)$ be the groupoid whose objects are the diagrams
\[
\begin{aligned}
\xymatrix{0 \ar[r] & Q_{0,1} \ar[d] \ar[r] & Q_{0,2} \ar[d] \ar[r] & \cdots \ar[d] \ar[r] & Q_{0,p} \ar[d] \\
 & 0 \ar[r] & Q_{1,2} \ar[d] \ar[r] & \cdots \ar[d] \ar[r] & Q_{1,p} \ar[d] \\
 & & 0 \ar[r] & \ddots \ar[d] \ar[r] & \vdots \ar[d] & \\
 & & & 0 \ar[r] & Q_{p-1,p} \ar[d] \\
 & & & & 0,}
\end{aligned}
\]
where each horizontal arrow is an injection with projective cokernel, each vertical arrow is a quotient map, and $Q_{i,j}$ is a chosen cokernel of $Q_{0,i}\to Q_{0,j}$ (these are the $P_{i,j}$ of \cite{brown2024hopf}); morphisms are isomorphisms of diagrams. The face and degeneracy functors delete and duplicate the row and column of a given index, so the nerves form a bisimplicial set
\[
X_{p,q}:=N_qS_p(\mathrm{Proj}_\Z),
\qquad
\mathrm{BK}(\Z):=|X|,
\]
with $K_i(\Z)=\pi_{i+1}\mathrm{BK}(\Z)$ \cite[Section~2.2]{brown2024hopf}.

The rank of a diagram is $\rk Q_{0,p}$. Diagrams of rank at most $n$ form a bisimplicial subset $F_nX$, giving the exhaustive rank filtration $F_n\mathrm{BK}(\Z):=|F_nX|$; set $F_{-1}X=\varnothing$. We write $\EQ^*_{*,*}$ for its rational homology spectral sequence, so that
\[
\EQ^1_{n,k}
=
H_{n+k}\bigl(F_n\mathrm{BK}(\Z),F_{n-1}\mathrm{BK}(\Z);\Q\bigr)
\;\cong\;
H_k\bigl(\GL_n(\Z);\St_n\bigr),
\]
the latter isomorphism fixed in Section~\ref{sec:rigidified}.

Direct sum of diagrams defines a filtered product
\[
\m\colon X\times X\longrightarrow X,
\qquad
\bigl((Q_{i,j}),(Q'_{i,j})\bigr)\longmapsto\bigl(Q_{i,j}\oplus Q'_{i,j}\bigr),
\]
carrying $F_mX\times F_nX$ into $F_{m+n}X$. The coproduct uses edgewise subdivision in the $S_\bl$-direction: the natural transformation $\Phi\colon\es(X)\to X\times X$ sends a diagram with top row of length $2p+1$ to its front truncation and its row of quotients,
\begin{equation} \label{eq:Phi}
\begin{aligned}
\Phi\colon\quad 0\to Q_{0,1}\to\cdots\to Q_{0,2p+1}
\longmapsto
\bigl(&0\to Q_{0,1}\to\cdots\to Q_{0,p},\\
&0\to Q_{p+1,p+2}\to\cdots\to Q_{p+1,2p+1}\bigr).
\end{aligned}
\end{equation}
Indeed,
\[
\rk Q_{0,p}+\rk Q_{p+1,2p+1}
=
\rk Q_{0,p}+\rk Q_{0,2p+1}-\rk Q_{0,p+1}
\leq \rk Q_{0,2p+1},
\]
so $\Phi$ is filtered for the product filtration. Composing its realization with the filtered inverse of the natural homeomorphism $|\es(X)|\cong|X|$ gives a filtered map $\mathrm{BK}(\Z)\to\mathrm{BK}(\Z)\times\mathrm{BK}(\Z)$.

Through the K\"unneth isomorphism these maps induce a product and coproduct on every page of $\EQ^*_{*,*}$; with the unit and counit in bidegree $(0,0)$, this is the Hopf algebra structure of \cite[Theorem~3.18]{brown2024hopf}, defined there space-theoretically. We compute its operations on $\EQ^1_{*,*}$ from the filtered maps $\m$ and $\Phi$ alone, through chain-level models on the totalized complexes of Sections~\ref{sec:bcgp-product} and~\ref{sec:bcgp-coproduct}, and identify them with the Steinberg product and coproduct.

\subsection{A rigidified model}\label{sec:rigidified}
We now make the identification $\EQ^1_{n,k}\cong H_k(\GL_n(\Z);\St_n)$ explicit. Fix $n \geq 0$, and write $S^n_p \subseteq S_p(\mathrm{Proj}_\Z)$ for the full subgroupoid of diagrams with $Q_{0,p} \cong \Z^n$. In bidegree $(p,q)$, the quotient $F_nX/F_{n-1}X$ is the pointed set $N_q S^n_p \sqcup \{\ast\}$. Let $C^n$ be its double complex of reduced chains, with the conventions of Section~\ref{sec:chains}. For $p > 0$, the face map $d_0$ sends a diagram to the basepoint if $Q_{0,1} \neq 0$, and $d_p$ does so if $Q_{p-1,p} \neq 0$. By the Eilenberg--Zilber theorem for bisimplicial objects \cite[Theorem~8.5.1]{weibel1994introduction}, $H_{n+k}\bigl(\mathrm{Tot}(C^n)\bigr) = \EQ^1_{n,k}$.

Let $\cT_p(\Z^n)$ be the groupoid of pairs $((Q_{i,j}),\beta)$ consisting of an object of $S^n_p$ and a \emph{rigidification} $\beta\colon Q_{0,p} \xrightarrow{\sim} \Z^n$. Its morphisms are isomorphisms of diagrams commuting with the rigidifications. The group $\GL_n(\Z)$ acts by postcomposing $\beta$. We lift the face and degeneracy maps of $F_nX/F_{n-1}X$ to the pointed sets $N_q\cT_p(\Z^n) \sqcup \{\ast\}$, as follows. 

For fixed $q$, the lifted face map $d_i$, for $0 < i < p$, acts on a string of isomorphisms in $N_q\cT_p(\Z^n)$ by deleting row and column $i$ from each diagram. The degeneracy maps repeat a row and column. These operations leave $Q_{0,p}$ and $\beta$ unchanged. The face maps $d_0$ and $d_p$ send a string to the basepoint when they lower the rank of the diagram; otherwise $\beta$ induces an isomorphism from the new terminal module to $\Z^n$. Let $D^n$ be the double complex of reduced chains on the resulting pointed bisimplicial set. Each $D^n_{p,q}$ is a free $\Q[\GL_n(\Z)]$-module, and forgetting $\beta$ induces an isomorphism $(D^n)_{\GL_n(\Z)} \cong C^n$.
 
To each object $((Q_{i,j}),\beta)$ in $\cT_p(\Z^n)$ we associate the flag $0 = U_0 \subseteq \cdots \subseteq U_p = \Q^n$, where $U_i$ is the rational span of the image of $Q_{0,i}$ in $\Z^n$. Define
\[
c_n\colon \mathrm{Tot}(D^n) \longrightarrow \wtl C_*(\Sigma\ST_n)
\]
by sending the class of $((Q_{i,j}),\beta)$ in $D^n_{p,0}$ to the class of its associated flag, and setting $c_n = 0$ on $D^n_{p,q}$ for $q > 0$.

\begin{lemma}\label{lem:rigid}
The map $c_n\colon \mathrm{Tot}(D^n) \to \wtl C_*(\Sigma\ST_n)$ is a $\GL_n(\Z)$-equivariant quasi-isomorphism. In particular, $H_*(\mathrm{Tot}(D^n)) \cong \St_n$, concentrated in degree $n$.
\end{lemma}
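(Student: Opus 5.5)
We first check that $c_n$ is a chain map and equivariant; then we compute the homology of $\mathrm{Tot}(D^n)$ by taking vertical ($q$-direction) homology first, and compare the resulting horizontal complex with $\wtl C_*(\Sigma\ST_n)$.

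\emph{Equivariance and chain map.} Equivariance is immediate, since $g\in\GL_n(\Z)$ postcomposes $\beta$ and so moves the associated flag by $g$. To see that $c_n$ is a chain map, note that $c_n$ vanishes for $q>0$. On $D^n_{p,1}$ the vertical differential is $d_0-d_1$ on a single isomorphism of rigidified diagrams, and the two endpoints have the same associated flag, since the isomorphism commutes with the rigidifications. Hence $c_n\circ d_v=0$ there. In bidegree $(p,0)$ we must compare the horizontal faces. For $0<i<p$, deleting row and column $i$ deletes $U_i$ from the flag. The faces $d_0$ and $d_p$ go to the basepoint exactly when $Q_{0,1}\neq 0$, respectively $Q_{p-1,p}\neq 0$, which is exactly when $U_1\neq 0$, respectively $U_{p-1}\neq\Q^n$. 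Here we use that the cokernels are projective, so a nonzero $Q_{0,1}$ has nonzero rational span and a nonzero $Q_{p-1,p}$ has positive rank. Otherwise $d_0$ and $d_p$ delete $U_0=0$ or $U_p=\Q^n$, matching the collapsing convention in $\Sigma\ST_n$. The total sign $(-1)^p d_v$ plays no role at $q=0$.

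\emph{Vertical homology.} For fixed $p$, the column $D^n_{p,\bl}$ is the reduced chains of the nerve of $\cT_p(\Z^n)$. This groupoid has trivial automorphism groups: an automorphism of a diagram commuting with $\beta$ is the identity on $Q_{0,p}$, hence on every $Q_{0,i}$ (these inject into $Q_{0,p}$), and hence on the quotients $Q_{i,j}$. So its nerve is homotopy equivalent to the discrete set of isomorphism classes, and the vertical homology is concentrated in $q=0$, equal to $\Q[\pi_0\cT_p(\Z^n)]$. An isomorphism class of rigidified diagrams is determined by the chain of saturated sublattices $\beta(\mathrm{im}\,Q_{0,i})\subseteq\Z^n$, since the $Q_{i,j}$ are then determined up to unique compatible isomorphism. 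Saturated sublattices correspond bijectively to rational subspaces via $L\mapsto L\otimes\Q$ and $U\mapsto U\cap\Z^n$. Therefore $\pi_0\cT_p(\Z^n)$ is in bijection with the flags $0=U_0\subseteq\cdots\subseteq U_p=\Q^n$ appearing in $\Sigma\ST_n$, excluding those in the collapsed subset. Degenerate simplices correspond on both sides: repeated rows and columns correspond exactly to repeated subspaces.

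\emph{Conclusion.} The spectral sequence of the double complex with the vertical filtration (first quadrant, so it converges) has $E^1$ term equal to $\wtl C_*(\Sigma\ST_n)$ in row $q=0$, with $d^1$ the flag differential, and $c_n$ induces this identification. Thus $c_n$ is a quasi-isomorphism by spectral sequence comparison. The statement $H_*(\mathrm{Tot}(D^n))\cong\St_n$ in degree $n$ then follows from Section~\ref{sec:tools}.

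The main obstacle is the careful bijection between $\pi_0\cT_p(\Z^n)$ and flags. This includes showing that the chosen cokernels introduce no extra isomorphism classes or automorphisms, and matching the basepoint conditions under $d_0$ and $d_p$, for which the saturation and projectivity of cokernels are needed.
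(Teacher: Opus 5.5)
Your proof is correct and follows essentially the paper's route: the paper likewise uses saturation to show that $\cT_p(\Z^n)$ is equivalent to the discrete category of flags, deduces that the map of double complexes is a column-wise quasi-isomorphism, and concludes with the acyclic assembly lemma, which plays the role of your spectral sequence comparison. One statement is loose: you call the columns of $D^n$ the reduced chains of the nerve of $\cT_p(\Z^n)$, but they are also normalized in the $p$-direction, hence quotients of those chains. The paper closes this by noting that normalization is exact, and that fact is what makes your remark that degenerate simplices correspond on both sides sufficient.
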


\begin{proof}
A flag $0 = U_0 \subseteq \cdots \subseteq U_p = \Q^n$ determines a rigidified diagram with $Q_{i,j} = (U_j \cap \Z^n)/(U_i \cap \Z^n)$. Every object of $\cT_p(\Z^n)$ with this flag is uniquely isomorphic to this diagram, since each $Q_{0,i}$ has saturated image in $\Z^n$. Thus the functor $f_p$ assigning to an object its flag is an equivalence from $\cT_p(\Z^n)$ to the discrete category of such flags.

After adjoining basepoints, the functors $f_p$ commute with the face and degeneracy maps in the $p$-direction. For $0 < i < p$, the face map $d_i$ deletes $U_i$ from the flag, and the degeneracy maps repeat a subspace. For $p > 0$, the face map $d_0$ sends a flag to the basepoint exactly when $U_1 \neq 0$, and $d_p$ does so exactly when $U_{p-1} \neq \Q^n$, as in $\Sigma\ST_n$. The induced map on chains is $c_n$, since the nerve of a discrete category has no nondegenerate simplices in positive degrees. Replacing $\beta$ by $g\beta$ replaces each $U_i$ by $gU_i$, so $c_n$ is $\GL_n(\Z)$-equivariant.

For each $p$, the equivalence of groupoids induces an isomorphism on the homology of their nerves. Taking reduced chains on each nerve and then normalizing in the $p$-direction preserves this property, since normalization is exact. Hence the map of double complexes inducing $c_n$ is a quasi-isomorphism in each column, and $c_n$ is a quasi-isomorphism by \cite[Lemma~2.7.3]{weibel1994introduction}. The last assertion follows from the homology of $\Sigma\ST_n$ recalled in Section~\ref{sec:tools}.
\end{proof}

Since $\mathrm{Tot}(D^n)$ is a bounded-below complex of free $\Q[\GL_n(\Z)]$-modules with homology $\St_n$ concentrated in degree $n$, Lemma~\ref{lem:comparison} and the identification $(D^n)_{\GL_n(\Z)} \cong C^n$ give
\begin{equation} \label{eq:identification}
\EQ^1_{n,k} = H_{n+k}\bigl(\mathrm{Tot}(C^n)\bigr) = H_{n+k}\bigl((\mathrm{Tot} D^n)_{\GL_n(\Z)}\bigr) \cong H_k\bigl(\GL_n(\Z);\St_n\bigr).
\end{equation}
All subsequent identifications of $\EQ^1$ with $\mc H$ use this isomorphism. We now compare it with the identification in \cite[Proposition~2.15]{brown2024hopf}. Both constructions use the same rigidified groupoids. For $n \geq 1$, the relation between our model using the doubly suspended building and the Tits-building chains used in that proof is given by deleting the endpoints of a flag:
\[
(0 \subset U_1 \subset \cdots \subset U_{p-1} \subset \Q^n)
\longmapsto
(U_1 \subset \cdots \subset U_{p-1}).
\]
For strict flags, this identifies the generators of $\wtl C_p(\Sigma\ST_n)$ with those of $\wtl C_{p-2}(\T(\Q^n))$.

We compare the differentials under this correspondence. In each complex, the differential is the alternating sum of the faces obtained by deleting a vertex, with vertices numbered starting at zero. In the full flag, $U_i$ corresponds to the $i$th vertex, so its deletion appears with coefficient $(-1)^i$. In the Tits-building flag, $U_i$ is the vertex labeled by $i-1$, so its deletion appears with coefficient $(-1)^{i-1}$. The two remaining faces of the full flag, obtained by deleting $0$ or $\Q^n$, vanish in the quotient defining $\Sigma\ST_n$. Consequently, after identifying the chain groups by the displayed correspondence, the two differentials are negatives of one another.

Changing the sign of a differential changes neither its kernel nor its image. The correspondence therefore identifies cycles and boundaries, and hence homology, with degrees differing by two. Moreover, deleting the endpoints sends each apartment cycle in $\Sigma\ST_n$ to the corresponding apartment cycle in the Tits building, with the same coefficients. Thus the two constructions use the same identification with $\St_n$, and hence give the same isomorphism \eqref{eq:identification}. 

\subsection{Comparison of the products}\label{sec:bcgp-product}

The direct-sum map lifts naturally to the rigidified complexes. Under the collapse of Lemma~\ref{lem:rigid}, this lift becomes the flag-sum map defining $\mu_{m,n}$. We now compare the induced products, keeping track of the K\"unneth sign arising from the degree shifts.

\begin{prop}\label{prop:bcgp-product}
Under the identification $\EQ^1_{n,k}\cong H_k(\GL_n(\Z);\St_n)$ fixed in \eqref{eq:identification}, the product induced by $\m$ agrees with the Steinberg product. In particular, for $x$ in $H_j(\GL_m(\Z);\St_m)$ and $y$ in $H_k(\GL_n(\Z);\St_n)$, it is given by
\[
x\cdot y=(\alpha_{m,n},\mu_{m,n})_*\bigl(x\times_{\mathrm{tot}}y\bigr).
\]
\end{prop}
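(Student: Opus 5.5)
The strategy is to lift $\m$ to the rigidified double complexes, so that its chain-level model becomes an $\alpha_{m,n}$-equivariant map between complexes of the type handled by Lemma~\ref{lem:comparison}, and then to identify its effect on $H_{m+n}$ with $\mu_{m,n}$ using Lemma~\ref{lem:rigid} and Proposition~\ref{prop:product}.

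\emph{Step 1: the rigidified lift.} Define a map of pointed bisimplicial sets $\wtl{\m}\colon (N_\bl\cT_\bl(\Z^m)_+)\wedge(N_\bl\cT_\bl(\Z^n)_+)\to N_\bl\cT_\bl(\Z^{m+n})_+$ by
\[
\bigl(((Q_{i,j}),\beta),((Q'_{i,j}),\beta')\bigr)\longmapsto \bigl((Q_{i,j}\oplus Q'_{i,j}),\,\beta\oplus\beta'\bigr),
\]
where $\Z^m\oplus\Z^n=\Z^{m+n}$ via $\phi,\psi$. This map is levelwise on strings of isomorphisms. It commutes with the inner faces and degeneracies. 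For $d_0$ and $d_p$, the sum lowers the rank exactly when one of the summands does, so the map is compatible with the basepoints. It is also $\alpha_{m,n}$-equivariant. Forgetting $\beta$ recovers the map $F_mX/F_{m-1}X\wedge F_nX/F_{n-1}X\to F_{m+n}X/F_{m+n-1}X$ induced by $\m$.

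Applying the bisimplicial Eilenberg--Zilber map in both directions, with the Koszul interchange of Section~\ref{sec:chains}, gives an $\alpha_{m,n}$-equivariant chain map
\[
\tau\colon \Tot(D^m)\otimes\Tot(D^n)\lra\Tot(D^{m+n}).
\]
On coinvariants, $\tau$ becomes the chain map $\Tot(C^m)\otimes\Tot(C^n)\to\Tot(C^{m+n})$ that computes the $\EQ^1$ product. This uses the fact that the product on each page is defined through Künneth and the realization of the filtered map $\m$, and that realization is modelled on totalized chains by Eilenberg--Zilber.

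\emph{Step 2: the effect of $\tau$ on $H_{m+n}$.} Check that the square
\[
c_{m+n}\circ\tau \;=\; (\theta_{m,n})_*\circ\mathrm{EZ}\circ(c_m\otimes c_n)
\]
commutes, at least on the $q=0$ row. Off that row both sides vanish: $c$ kills positive $q$-degree, and Eilenberg--Zilber in the $q$-direction preserves positive $q$-degree. On the $q=0$ row, the associated flag of a direct sum of rigidified diagrams is the levelwise sum $\phi(U_\bl)+\psi(U'_\bl)$. Eilenberg--Zilber in the $p$-direction uses the same shuffles on both sides, and the Koszul interchange contributes no sign because $q=0$. Lemma~\ref{lem:rigid} says the $c$'s are quasi-isomorphisms, and Proposition~\ref{prop:product} identifies the right-hand side on $H_{m+n}$. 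Together they show that $\tau$ induces $\mu_{m,n}$ on $H_{m+n}$.

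\emph{Step 3: conclude with the comparison lemmas.} By Lemma~\ref{lem:kunneth-sign}, applied with $a=m$, $b=n$, $T'=\Tot D^m$ and $T''=\Tot D^n$, the Künneth isomorphism on the coinvariant complexes equals $(-1)^{mj}$ times the group-homology cross product, which is exactly $\times_{\mathrm{tot}}$. The source $\Tot D^m\otimes\Tot D^n$ is free and bounded below with homology $\St_m\otimes\St_n$ in degree $m+n$. Lemma~\ref{lem:comparison}, with $N=m+n$, then identifies $\tau_*$ on coinvariants with $(\alpha_{m,n},\mu_{m,n})_*$ under \eqref{eq:identification}. Composing the two identifications gives the formula in the statement.

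I expect the main obstacle to be sign bookkeeping in Step 1. One must verify that the Eilenberg--Zilber map for bisimplicial sets, with the totalization sign $(-1)^p d_v$ and the interchange sign $(-1)^{qr}$, agrees with the Künneth map on $\Tot(C^m)\otimes\Tot(C^n)$ used to define the product of \cite{brown2024hopf}. Only then is the sign $(-1)^{mj}$ entirely accounted for by Lemma~\ref{lem:kunneth-sign}, with no extra sign coming from the bidegree shift. I would settle this by checking that both are natural chain maps inducing the same map on realizations, and then using uniqueness of natural Eilenberg--Zilber transformations up to natural homotopy.
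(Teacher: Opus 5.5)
Your proposal is correct and follows essentially the same route as the paper. You build the rigidified direct-sum lift $\tau_{m,n}$ and use the commuting square with $c_r$ and $(\theta_{m,n})_*\circ\mathrm{EZ}$ to get $H_{m+n}(\tau_{m,n})=\mu_{m,n}$; you then conclude with Lemma~\ref{lem:comparison} and Lemma~\ref{lem:kunneth-sign}.

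There is one small slip in labeling. With the statement's labeling ($y\in H_k$), the sign from Lemma~\ref{lem:kunneth-sign} is $(-1)^{mk}$ rather than $(-1)^{mj}$. Your conclusion that this sign gives $\times_{\mathrm{tot}}$ is still right.

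Your acyclic-models check that the bisimplicial Eilenberg--Zilber map matches the K\"unneth map used for the $\EQ^1$ product is extra care; the paper simply asserts this compatibility.
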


\begin{proof}
Direct sum lifts to the rigidified groupoids $\cT_p(\Z^m)\times\cT_p(\Z^n)\longrightarrow\cT_p(\Z^{m+n})$ via
\[
\bigl(((Q_{i,j}),\beta),((Q'_{i,j}),\beta')\bigr)\longmapsto\bigl((Q_{i,j}\oplus Q'_{i,j}),\beta\oplus\beta'\bigr).
\]
This map is equivariant for the block diagonal inclusion $\alpha_{m,n}\colon\GL_m(\Z)\times\GL_n(\Z)\to\GL_{m+n}(\Z)$ and commutes with the simplicial structure and the outer-face conventions. Applying the Eilenberg--Zilber map in the two simplicial directions and totalizing gives an $\alpha_{m,n}$-equivariant chain map
\[
\tau_{m,n}\colon\mathrm{Tot}(D^m)\otimes\mathrm{Tot}(D^n)\longrightarrow\mathrm{Tot}(D^{m+n}).
\]
After passage to coinvariants, this is the map induced by the filtered direct sum $\m$ following the Eilenberg--Zilber map. Hence the product on $\EQ^1$ is the K\"unneth map followed by $(\tau_{m,n})_*$.

The maps $c_r$ commute with direct sum and with the shuffle maps, so the following diagram commutes:
\[
\xymatrix@C=3.5em{
\mathrm{Tot}(D^m)\otimes\mathrm{Tot}(D^n)
    \ar[rr]^-{\tau_{m,n}}
    \ar[d]_{c_m\otimes c_n}
&&
\mathrm{Tot}(D^{m+n})
    \ar[d]^{c_{m+n}}
\\
\wtl C_*(\Sigma\ST_m)\otimes\wtl C_*(\Sigma\ST_n)
    \ar[rr]_-{(\theta_{m,n})_*\circ\mathrm{EZ}}
&&
\wtl C_*(\Sigma\ST_{m+n}).
}
\]
Indeed, a pair of rigidified diagrams collapses to flags $U_\bl$ and $V_\bl$, while their direct sum collapses to the flag $\phi(U_\bl)+\psi(V_\bl)$. The bottom map is therefore the chain map defining $\mu_{m,n}$ in Section~\ref{sec:product}. Under the identifications of Lemma~\ref{lem:rigid}, it follows that $H_{m+n}(\tau_{m,n})=\mu_{m,n}$.

The complexes in the source and target of $\tau_{m,n}$ are bounded below and free over the corresponding group rings, with homology concentrated in degree $m+n$. Lemma~\ref{lem:comparison} therefore identifies the map induced by $\tau_{m,n}$ on coinvariant homology with $(\alpha_{m,n},\mu_{m,n})_*$.

It remains to compare the K\"unneth conventions. By Lemma~\ref{lem:kunneth-sign} with $a=m$ and $b=n$, on the summand of group-homological degrees $(j,k)$ the K\"unneth map of the coinvariant complexes is $(-1)^{mk}$ times the ordinary cross product, which is $\times_{\mathrm{tot}}$ by the definition in Section~\ref{sec:induced}. Consequently, the product on $\EQ^1$ is $(\alpha_{m,n},\mu_{m,n})_*\circ\times_{\mathrm{tot}}$.
\end{proof}

\subsection{Comparison of the coproducts}\label{sec:bcgp-coproduct}

On the associated graded, the map $\Phi$ given by \eqref{eq:Phi} is supported on the diagrams for which the middle map $Q_{0,p}\to Q_{0,p+1}$ is an isomorphism, equivalently $Q_{p,p+1}=0$. For such a diagram, the rigidification identifies the common image of the two middle terms with a saturated submodule $L \subseteq \Z^n$, and the two factors are the front diagram over $L$ and the quotient diagram over $\Z^n/L$. Under the collapse of Lemma~\ref{lem:rigid}, this is the splitting map $r$ defining the Steinberg coproduct.

For a finitely generated free $\Z$-module $V$, let $D(V)$ denote the rigidified complex defined in Section~\ref{sec:rigidified}, with terminal rigidification to $V$; thus $D(\Z^n) = D^n$, and Lemma~\ref{lem:rigid} holds with $\Z^n$, $\GL_n(\Z)$, $\Sigma\ST_n$ replaced by $V$, $\GL(V)$, $\Sigma\ST(V \otimes \Q)$ respectively. Write $\es(D^n)$ for the double complex obtained by applying edgewise subdivision in the first simplicial direction before taking chains. Fix $0 \leq d \leq n$, and let $P_d \leq \GL_n(\Z)$ be the stabilizer of $\Z^d \subseteq \Z^n$, with Levi quotient $L_d \cong \GL_d(\Z) \times \GL_{n-d}(\Z)$, as in Section~\ref{sec:induced}. The action of $\GL_n(\Z)$ on saturated rank-$d$ submodules gives, via $g \otimes x \mapsto gx$, a canonical isomorphism of complexes of $\Q[\GL_n(\Z)]$-modules
\[
\Ind_{P_d}^{\GL_n(\Z)}\bigl(\mathrm{Tot}(D^d) \otimes \mathrm{Tot}(D^{n-d})\bigr) \xrightarrow{\;\cong\;} \bigoplus_{\substack{L \subseteq \Z^n\ \text{saturated} \\ \rk L = d}} \mathrm{Tot}(D(L)) \otimes \mathrm{Tot}(D(\Z^n/L)),
\]
where $P_d$ acts on the tensor product through its Levi quotient $\pi_d$.

Let a rigidified simplex of $\es(D^n)$ have top row
\[
0 \longrightarrow Q_{0,1} \longrightarrow \cdots \longrightarrow Q_{0,2p+1}
\]
and rigidification $\beta \colon Q_{0,2p+1} \xrightarrow{\sim} \Z^n$. Call it \emph{balanced} if its middle map is an isomorphism, equivalently if $Q_{p,p+1}=0$, and set its image to zero otherwise. In the balanced case, let $L \subseteq \Z^n$ be the image of $Q_{0,p}$ under $\beta$ and send the simplex to the pair consisting of
\[
0 \longrightarrow Q_{0,1} \longrightarrow \cdots \longrightarrow Q_{0,p}, \qquad 0 \longrightarrow Q_{p+1,p+2} \longrightarrow \cdots \longrightarrow Q_{p+1,2p+1},
\]
rigidified by $\beta|_{Q_{0,p}} \colon Q_{0,p} \xrightarrow{\sim} L$ and by the induced isomorphism $Q_{p+1,2p+1} \xrightarrow{\sim} \Z^n/L$. Applying reduced chains and the Alexander--Whitney maps in both simplicial directions gives a $\GL_n(\Z)$-equivariant chain map
\[
\widetilde\Phi \colon \mathrm{Tot}(\es(D^n)) \longrightarrow \bigoplus_{L \subseteq \Z^n\ \text{saturated}} \mathrm{Tot}(D(L)) \otimes \mathrm{Tot}(D(\Z^n/L)).
\]

\begin{lemma}\label{lem:phitilde}
Forgetting the rigidifications, $\widetilde\Phi$ is the Alexander--Whitney map following the chain map that $\Phi$ induces on the associated graded of the rank filtration. Under the collapse of Lemma~\ref{lem:rigid}, it covers $\mathrm{AW} \circ r$, with $r$ the splitting map of Section~\ref{sec:coproduct}.
\end{lemma}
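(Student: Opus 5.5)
The proof has two parts, both checked on generators, since $\widetilde\Phi$, $\Phi$, $r$ and $\mathrm{AW}$ are all defined simplexwise.

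\emph{First assertion.} Forgetting rigidifications identifies $(\es(D^n))_{\GL_n(\Z)}$ with reduced chains on $\es(F_nX/F_{n-1}X)$. It also identifies the coinvariants of the target with $\bigoplus_{d}\widetilde C_*\bigl((F_dX/F_{d-1}X)\wedge(F_{n-d}X/F_{n-d-1}X)\bigr)$, since saturated $L$ of rank $d$ form one orbit. I will check that the associated graded of the filtered map $\Phi$, from $\es(F_nX)/\es(F_{n-1}X)$ to $\bigvee_{d}F_dX/F_{d-1}X\wedge F_{n-d}X/F_{n-d-1}X$, is exactly the balanced/unbalanced rule. A diagram of rank $n$ has $\rk Q_{0,p}+\rk Q_{p+1,2p+1}=n-\rk Q_{p,p+1}$. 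Hence its image under $\Phi$ lies in filtration $n-1$, and is sent to the basepoint, precisely when $Q_{p,p+1}\neq 0$, i.e.\ in the unbalanced case. In the balanced case, $\Phi$ outputs the front truncation and the quotient row, which is exactly what $\widetilde\Phi$ outputs once $\beta$ is forgotten. The same holds on nerves: isomorphisms of diagrams restrict to the front and quotient parts, compatibly with $\beta|_{Q_{0,p}}$ and the induced rigidification on $Q_{p+1,2p+1}$.

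Next I must check that this assignment is simplicial in both directions, so that reduced chains and $\mathrm{AW}$ apply. This mirrors Lemma~\ref{lem:splitting}. Degeneracies and inner faces $d_i$, $0<i<p$, of $\es$ preserve the middle pair. The outer faces $d_0$ and $d_p$ of $\es$ delete one of the middle rows and need the argument of Lemma~\ref{lem:splitting}. After that, the chain map is $\mathrm{AW}$ composed with the induced chain map, by definition. I expect this step to be the main obstacle: verifying that sending unbalanced simplices to zero commutes with $d_0$ and $d_p$, and that the rigidified outer-face conventions of Section~\ref{sec:rigidified} are respected.

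\emph{Second assertion.} Apply $c$ on both sides and compare generators in $q=0$; for $q>0$ both sides vanish. This uses that $c$ kills positive nerve degree, and that $\mathrm{AW}$ in the nerve direction on a $(p,0)$-simplex has only the $(0,0)$ term. A balanced rigidified simplex with flag $U_0\subseteq\cdots\subseteq U_{2p+1}$, where $U_i=\beta(Q_{0,i})\otimes\Q$, has $U_p=U_{p+1}=L\otimes\Q$. Its front diagram collapses to $(U_0\subseteq\cdots\subseteq U_p)$ in $\Sigma\ST(L\otimes\Q)$. The quotient diagram has $Q_{p+1,j}$ with image $(U_j\cap\Z^n)/L$, which collapses to $(U_{p+1}/U\subseteq\cdots\subseteq U_{2p+1}/U)$ with $U=L\otimes\Q$. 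This is precisely $r_U$ of \eqref{eq:rU}.

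Conversely, consider an unbalanced simplex, i.e.\ one with $U_p\subsetneq U_{p+1}$, which uses saturation as in the proof of Lemma~\ref{lem:rigid}. Its flag is sent by $r$ to $\ast$, because Lemma~\ref{lem:splitting} requires $V_p=U=V_{p+1}$. So the square relating $\widetilde\Phi$, $r$, $c_n$ (subdivided) and $c_L\otimes c_{\Z^n/L}$ commutes after $\mathrm{AW}$. Finally, $c$ commutes with $\mathrm{AW}$ in the $p$-direction, since both are given by front and back faces, and $c$ is simplicial by Lemma~\ref{lem:rigid}.
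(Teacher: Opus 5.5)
Your proposal is correct and follows essentially the same route as the paper. The rank identity $\rk Q_{0,p}+\rk Q_{p+1,2p+1}=n-\rk Q_{p,p+1}$ sends unbalanced simplices to the basepoint of the associated graded. Compatibility with faces is then the Lemma~\ref{lem:splitting}-style check: inner faces and degeneracies preserve the middle pair, no face creates balance, and outer faces survive exactly when both factors' outer faces do. After collapse, a balanced diagram gives precisely the front and quotient flags defining $r_{L\otimes\Q}$.

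The step you flag as the main obstacle is lighter than you expect. Once you have identified the unrigidified map with the associated graded of the filtered simplicial map $\Phi$, its simpliciality is automatic, and the rigidified version only requires transporting $\beta$ as in Section~\ref{sec:rigidified}.
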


\begin{proof}
By the rank inequality from the start of this section, when the terminal module has rank $n$ the two factors produced by $\Phi$ have total rank at most $n$, with equality exactly when the middle map $Q_{0,p}\to Q_{0,p+1}$ is an isomorphism; the unbalanced diagrams therefore map to the basepoint of the associated graded. The assignment is $\GL_n(\Z)$-equivariant, $g$ carrying the $L$-summand to the $gL$-summand, and compatible with faces and degeneracies: the inner structure maps act factorwise, no structure map creates balance, and the two outer faces survive precisely when the corresponding outer faces of both factors survive, the rigidifications transported as in Section~\ref{sec:rigidified}. Forgetting $\beta$ thus recovers the chain map induced by $\Phi$ on the associated graded, followed by the Alexander--Whitney maps. After collapse, a balanced diagram becomes a flag with common middle term $L \otimes \Q$, sent to its front and quotient flags --- the defining formula for $r_{L \otimes \Q}$ in Section~\ref{sec:coproduct} --- so $\widetilde\Phi$ covers $\mathrm{AW} \circ r$.
\end{proof}

\begin{prop}\label{prop:bcgp-coproduct}
Under the identification $\EQ^1_{n,k} \cong H_k(\GL_n(\Z);\St_n)$ fixed in \eqref{eq:identification}, the coproduct induced by $\Phi$ agrees with the Steinberg coproduct of Section~\ref{sec:induced}. In particular, its rank-$d$ component is $\Delta_d$.
\end{prop}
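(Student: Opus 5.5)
The plan mirrors the product comparison, with edgewise subdivision handled through $\varphi_0$. The complex $\mathrm{Tot}(\es(D^n))$ is not directly the source complex $\mathrm{Tot}(D^n)$. So the first step is to relate the two by the rigidified lift of $\varphi_0$. Restricting a rigidified diagram of length $2p+1$ to its first $p+1$ columns keeps the terminal module $Q_{0,p}$, and this module has rank $n$ only if the remaining inclusions are isomorphisms. The rigidification then transports along $Q_{0,p}\cong Q_{0,2p+1}$. This gives a $\GL_n(\Z)$-equivariant chain map $\widetilde\varphi_0\colon \mathrm{Tot}(\es(D^n))\to\mathrm{Tot}(D^n)$ that covers $\varphi_0$ under the collapse of Lemma~\ref{lem:rigid}. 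The same column-wise argument as in Lemma~\ref{lem:rigid} shows that $\mathrm{Tot}(\es(D^n))\to\wtl C_*(\es(\Sigma\ST_n))$ is an equivariant quasi-isomorphism. Combined with \cite[Proposition~3.8]{brown2024hopf}, this makes $\widetilde\varphi_0$ a quasi-isomorphism of bounded-below complexes of free $\Q[\GL_n(\Z)]$-modules inducing the identity on $\St_n$ in degree $n$. I also need to check that, after passage to coinvariants and the associated graded, the map of \cite{brown2024hopf} given by "inverse of the homeomorphism $|\es(X)|\cong|X|$" is computed on $\EQ^1$ by $(\widetilde\varphi_0)_*^{-1}$. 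This again follows from \cite[Proposition~3.8]{brown2024hopf}, which shows that $|\varphi_0|$ is filtered-homotopic to that homeomorphism.

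With this in hand, the coproduct on $\EQ^1_{n,k}$ is the composite $\widetilde\Phi_*\circ(\widetilde\varphi_0)_*^{-1}$ on coinvariant homology, followed by the inverse Künneth map. Lemma~\ref{lem:phitilde} justifies this, since forgetting rigidifications turns $\widetilde\Phi$ into AW composed with the associated-graded map of $\Phi$. To apply Lemma~\ref{lem:lift}, I would use Lemma~\ref{lem:chain-lifting} to choose an equivariant homotopy inverse $s\colon\mathrm{Tot}(D^n)\to\mathrm{Tot}(\es(D^n))$ of $\widetilde\varphi_0$; this is possible because both complexes are free. Then I project $\widetilde\Phi\circ s$ onto the rank-$d$ summands, identified with $\Ind_{P_d}^{\GL_n(\Z)}\bigl(\mathrm{Tot}(D^d)\otimes\mathrm{Tot}(D^{n-d})\bigr)$ by the canonical isomorphism above. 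The result is a $\GL_n(\Z)$-equivariant map $\tau\colon X_\bl=\mathrm{Tot}(D^n)\to\Ind T_\bl$ with $N=n$, where $T_\bl=\mathrm{Tot}(D^d)\otimes\mathrm{Tot}(D^{n-d})$ is a complex of free $\Q[L_d]$-modules. By Lemma~\ref{lem:kunneth-sign}, $T_\bl$ has homology $W_d$ in degree $n$.

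To compute $H_n(\tau)$, I apply the collapses $c$. By Lemma~\ref{lem:phitilde}, $c\circ\widetilde\Phi = \mathrm{AW}\circ r\circ c$, and $c\circ\widetilde\varphi_0=\varphi_0\circ c$. Hence on $H_n$ the map $\tau$ induces $\mathrm{AW}_*\circ r_*\circ(\varphi_0)_*^{-1}$ restricted to rank $d$, which is $\delta_d$ by definition. A small compatibility must be checked here: the Künneth identification $H_n(T_\bl)\cong\St_d\otimes\St_{n-d}$ via $c_d\otimes c_{n-d}$ should match the AW target, with no extra sign in top degree. Lemma~\ref{lem:lift} then gives $\kappa_{\mathrm{tot}}^{-1}\circ\tau_*=\Delta_d$. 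Finally, the $\kappa_{\mathrm{tot}}$ used there agrees with the Künneth map on coinvariant complexes, by Lemma~\ref{lem:kunneth-sign} with $a=d$ and $b=n-d$, exactly as in Proposition~\ref{prop:bcgp-product}. Summing over $d$ gives the claim.

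I expect the main obstacle to be the first step: showing that the chain-level recipe (the rigidified $\widetilde\varphi_0$ inverted on homology) really computes the map induced on $\EQ^1$ by the filtered inverse homeomorphism. I would handle this by working on the associated graded $F_n/F_{n-1}$ and using that $|\varphi_0|$ is a filtered map homotopic through filtered maps to the homeomorphism. Any filtered homotopy induces equal maps on $E^1$, so one may replace the homeomorphism by $|\varphi_0|$ and invert it on $E^1$.
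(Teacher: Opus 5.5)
Your proposal is correct and follows the paper's proof: the rank-$d$ component of $\widetilde\Phi$ is an equivariant lift of $\delta_d$ by Lemma~\ref{lem:phitilde}. Lemma~\ref{lem:lift} then identifies its coinvariant map with $\rho_d\circ(\id,\delta_d)_*$, and Lemma~\ref{lem:kunneth-sign} matches the K\"unneth map with $\kappa_{\mathrm{tot}}$. The only difference is cosmetic: the paper applies Lemma~\ref{lem:lift} directly to $X_\bl=\mathrm{Tot}(\es(D^n))$, with $H_n$ identified to $\St_n$ via the collapse and $(\varphi_0)_*$, whereas you pass back to $\mathrm{Tot}(D^n)$ through the rigidified $\widetilde\varphi_0$ and a homotopy inverse $s$, which also spells out the filtered-homotopy justification for computing the inverse homeomorphism on $\EQ^1$ by $(\varphi_0)_*^{-1}$ that the paper leaves implicit.
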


\begin{proof}
Fix $d$ and put
\[
X_\bl := \mathrm{Tot}(\es(D^n)), \qquad T_\bl := \mathrm{Tot}(D^d) \otimes \mathrm{Tot}(D^{n-d}), \qquad W_d := \St_d \otimes \St_{n-d}.
\]
Let
\[
\tau_d \colon X_\bl \longrightarrow \Ind_{P_d}^{\GL_n(\Z)} T_\bl
\]
be the rank-$d$ component of $\widetilde\Phi$, followed by the inverse of the displayed induced-module isomorphism. By Lemma~\ref{lem:rigid} applied to the edgewise subdivision, $X_\bl$ is a bounded-below complex of free $\Q[\GL_n(\Z)]$-modules with homology $\St_n$ concentrated in degree $n$; we identify $H_n(X_\bl)$ with $\St_n$ by the collapse of Lemma~\ref{lem:rigid} followed by $(\varphi_0)_*$, both quasi-isomorphisms. Likewise, by Lemma~\ref{lem:rigid} and the K\"unneth theorem, $T_\bl$ is a bounded-below complex of free $\Q[L_d]$-modules with homology $W_d$ concentrated in degree $n$. By Lemma~\ref{lem:phitilde} and the definition of $\delta$ in Section~\ref{sec:coproduct}, $\tau_d$ lifts $\delta_d$: that is, $H_n(\tau_d) = \delta_d$ under these identifications.

By Lemma~\ref{lem:lift} with $N = n$, the map induced by $\tau_d$ on coinvariant homology is, through $(\varphi_0)_*$,
\[
\rho_d \circ (\id, \delta_d)_* \colon H_k(\GL_n(\Z);\St_n) \longrightarrow H_k(L_d; W_d).
\]
The rank-$d$ component of the coproduct on $\EQ^1$ is this composite followed by the inverse K\"unneth map under the canonical identification
\[
(T_\bl)_{L_d} \cong \mathrm{Tot}(C^d) \otimes \mathrm{Tot}(C^{n-d}).
\]
By Lemma~\ref{lem:kunneth-sign} with $a=d$ and $b=n-d$, this K\"unneth map is, on the summand of degrees $(i,j)$, the sign $(-1)^{dj}$ times the ordinary cross product, hence $\times_{\mathrm{tot}}$; its inverse is therefore $\kappa_{\mathrm{tot}}^{-1}$. Hence the rank-$d$ component is
\[
\kappa_{\mathrm{tot}}^{-1} \circ \rho_d \circ (\id,\delta_d)_*=\Delta_d,
\]
the last equality by the definition of $\Delta_d$ in Section~\ref{sec:induced}. Summing over $d$ proves the proposition.
\end{proof}

\begin{proof}[Proof of Theorem~\ref{thm:main}]
Transport the Quillen Hopf structure to $\mc H$ through the identification \eqref{eq:identification}. By Theorem~\ref{thm:amp}, the sharbly product and coproduct agree with the Steinberg product and coproduct of Section~\ref{sec:induced}. By Propositions~\ref{prop:bcgp-product} and~\ref{prop:bcgp-coproduct}, the same is true of the Quillen product and coproduct. Thus the two structures have the same product and coproduct. Their units and counits agree by uniqueness, as do their antipodes, and the theorem follows.
\end{proof}

\begin{remark}
The same proof gives a PID version of Theorem~\ref{thm:main}. Let $R$ be a PID with fraction field $F$, and define
\[
\mc H_R:=\bigoplus_{n,k\geq 0}H_k\bigl(\GL_n(R);\St(F^n)\bigr),
\]
where $\St(F^n)$ denotes the rational Steinberg module. Under the canonical identification of $\mc H_R$ with the $E^1$-page of the rank filtration on $BK(R)$, the sharbly Hopf structure for the trivial character coincides with the Hopf structure induced by the filtered Waldhausen product and coproduct.

Indeed, the sharbly Hopf structure is defined for every PID \cite[Theorem~3.1]{ash2024hopf}, while the filtered Waldhausen construction and the description of its $E^1$-page apply to Dedekind domains \cite[Section~1.7 and Remark~2.16]{brown2024hopf}. Over a PID, every finitely generated projective module is free, so the sum over projective-module isomorphism classes in the Dedekind-domain description of the $E^1$-page has one summand in each rank. Moreover, saturated submodules of $R^n$ are direct summands and form a single $\GL_n(R)$-orbit in each rank, and saturated submodules correspond to subspaces of $F^n$. Thus the rigidified model, the induced-module decomposition, and the comparisons of the product and coproduct carry over with $R$ and $F$ in place of $\Z$ and $\Q$. The proof of Proposition~\ref{prop:sharbly} also carries over, since the stabilizer of a spanning sharbly is finite.
\end{remark}

\printbibliography

\end{document}